\newcommand{\bbR}{\mathbb{R}}
\newcommand{\calL}{\mathcal{L}}
\DeclareMathOperator{\id}{id} 
\DeclareMathOperator{\dist}{dist} 
\newcommand{\argument}{\mathord{\,\cdot\,}} 
\newcommand{\dxShort}{\mathrm{d}} 
\DeclareMathOperator{\linSpan}{span} 
\newcommand{\norm}[1]{\left\lVert #1 \right\rVert} 
\newcommand{\modulus}[1]{\left\lvert #1 \right\rvert} 
\newcommand{\duality}[2]{\left\langle#1\, ,\, #2\right\rangle} 
\newcommand{\dom}[1]{\operatorname{dom}\left(#1\right)} 
\newcommand{\openBall}[2]{B_{<#1}(#2)}
\newcommand{\blNorm}[1]{\norm{#1}_{\mathrm{BL}}} 
\theoremstyle{plain}
\theoremstyle{definition}
\newtheorem{definition}{Definition}[section]
\newtheorem{remark}[definition]{Remark}
\newtheorem*{remark*}{Remark}
\newtheorem*{remarks*}{Remarks}
\newtheorem{example}[definition]{Example}
\newtheorem{proposition}[definition]{Proposition}
\newtheorem{lemma}[definition]{Lemma}
\newtheorem{theorem}[definition]{Theorem}
\newtheorem{corollary}[definition]{Corollary}
\newtheorem{open_problem}[definition]{Open Problem}
\newtheorem{open_problems}[definition]{Open Problems}
\numberwithin{equation}{section} 
\begin{document}

\title[Negative Sobolev and extrapolation spaces]{The lattice structure of negative Sobolev and extrapolation spaces}

\author{Sahiba Arora}
\address{Department of Applied Mathematics, University of Twente, 217, 7500 AE, Enschede, The Netherlands}
\email{sahiba.arora@math.uni-hannover.de}

\author{Jochen Glück}
\address{University of Wuppertal, School of Mathematics and Natural Sciences, Gaußstr.\ 20, 42119 Wuppertal, Germany}
\email{glueck@uni-wuppertal.de}

\author{Felix L. Schwenninger}
\address{Department of Applied Mathematics, University of Twente, 217, 7500 AE, Enschede, The Netherlands}
\email{f.l.schwenninger@utwente.nl}

\begin{abstract}
    It is well-known that the Sobolev spaces $W^{k,p}(\mathbb{R}^d)$ are vector lattices with respect to the pointwise almost everywhere order if $k \in \{0,1\}$, but not if $k \ge 2$. 
    In this note, we consider negative indices $-k$ and show that the span of the positive cone in $W^{-k,p}(\mathbb{R}^d)$ is a vector lattice in this case. 
    On bounded domains $\Omega \subseteq \mathbb{R}^d$ we  obtain a partial result in this direction. 
    
    We also prove a related abstract theorem:~if $(T(t))_{t \in [0,\infty)}$ is a positive $C_0$-semigroup on a Banach lattice $X$ with order continuous norm, then the span of the cone $X_{-1,+}$ in the extrapolation space $X_{-1}$ is a vector lattice. 
    This complements results obtained by Bátkai, Jacob, Wintermayr, and Voigt in the context of perturbation theory and provides additional context for the theory of infinite-dimensional positive systems.
\end{abstract}

\subjclass[2020]{46B40; 46B42; 46E35}
\keywords{ordered Banach spaces; Sobolev spaces; extrapolation space; Banach lattice; vector lattice}

\date{\today}

\maketitle

\section{Introduction}

For every $p \in [1,\infty]$, the first order Sobolev space $W^{1,p}(\bbR^d)$ is a vector lattice with respect to the pointwise almost everywhere order. 
This follows from the fact that $\modulus{f} \in W^{1,p}(\bbR^d)$ for all $f \in W^{1,p}(\bbR^d)$; see \cite[Lemma~7.6]{GilbargTrudinger2001}. In other words, the Sobolev space is even a sublattice of the Banach lattice $L^p(\bbR^d)$. 
In contrast, for $k \ge 2$, the space $W^{k,p}(\bbR^d)$ is not a vector lattice \cite[Example~(d) on Page~419]{ArendtNittka2009}. 
In this note, we are interested in the case of negative $k$. 
At first glance, the situation seems to be trivial: 
for an integer $k > 0$, there are elements of $W^{-k,p}(\bbR^d)$ which cannot be written as the difference of two positive elements. In particular, the space is not a vector lattice. 

However this simple answer does not really give us much insight into the order structure of $W^{-k,p}(\bbR^d)$ -- it simply means that the cone is small compared to the entire space (which is, from a theoretical point of view, a consequence of the fact that the cone in Sobolev spaces of positive order is not \emph{normal}; see the end of the introduction for the definition of this and further notions).
If we restrict our attention to the span $W^{-k,p}(\bbR^d)_+ - W^{-k,p}(\bbR^d)_+$ of the positive cone, we shall see in Theorem~\ref{exa:negative-sobolev-spaces} that this is indeed a vector lattice. 
On bounded open sets $\Omega \subseteq \bbR^d$ the situation is more subtle and we prove a partial result for this case in the rest of Section~\ref{section:sobolev}.

A related phenomenon occurs, in a more abstract setting, in the theory of positive operator semigroups. 
If $X$ is a Banach lattice and $T = (T(t))_{t \ge 0}$ is a positive $C_0$-semigroup on $X$, one can associate a so-called \emph{extrapolation space} $X_{-1}$ to $T$ -- this is a larger space into which $X$ embeds continuously and densely. 
The space $X_{-1}$ can be considered as an abstract Sobolev space of negative order.
To get an order structure on $X_{-1}$ it is natural to define the cone $X_{-1,+}$ as the closure of $X_+$ within the space $X_{-1}$. 
This order structure was introduced by Bátkai, Jacob, Wintermayr, and Voigt in \cite{BatkaiJacobVoigtWintermayr2018} in order to study positive unbounded perturbations of semigroup generators; they also proved that this is indeed a cone and that $X_+ = X_{-1,+} \cap X$, i.e., the order structures on $X$ and $X_{-1}$ are compatible \cite[Proposition~2.3]{BatkaiJacobVoigtWintermayr2018}. 
In addition to further results about positive perturbations \cite{BarbieriEngel2025, BarbieriEngel2026}, 
the order on $X_{-1}$ also plays a significant role in the theory of positive linear systems in infinite-dimensions: 
there one is interested in positive \emph{control operators} $B$ that operate from an ordered Banach space $U$, the so-called \emph{input space}, into the extrapolation space $X_{-1}$; see the recent papers \cite{Gantouh2022a, AroraGlueckPaunonenSchwenninger2024} and also \cite{Gantouh2024}.
Just as for the concrete Sobolev space $W^{k,p}(\bbR^d)$ for $k < 0$, the cone $X_{-1,+}$ does not span $X_{-1}$, in general, and hence $X_{-1}$ is not a vector lattice. 
This was observed in a concrete example in \cite[Example~5.1]{BatkaiJacobVoigtWintermayr2018}. 
Yet, we show in Section~\ref{section:extrapolation} that a similar phenomenon as for the concrete Sobolev spaces occurs, i.e., the span $X_{-1,+} - X_{-1,+}$ is indeed a vector lattice provided that the Banach lattice $X$ has order continuous norm.

Our results are based on more abstract theorems about how to transfer lattice properties between ordered Banach spaces that are not isomorphic. 
We prove those abstract theorems in Section~\ref{section:abstract}. 
Different results regarding when an ordered Banach space (or ordered vector space) is automatically a vector lattice can be found in \cite[Section~4]{Glueck2021}.

We close this introduction by recalling  few notions from the theory of ordered Banach spaces and Banach lattices.

\subsection*{Preliminaries on ordered Banach spaces and Banach lattices}

Let $X$ be a real vector space. A non-empty set $X_+\subseteq X$ is called a \emph{cone} if  $\alpha X_+ +\beta X_+ \subseteq X_+$ for all real numbers $\alpha,\beta \ge 0$ and $X_+ \cap (-X_+) = \{0\}$. The real vector space $X$ together with a cone $X_+$ is called an \emph{ordered vector space} and $X_+$ is called the \emph{positive cone} of $X$. 
The term \emph{ordered space} is justified by the fact that $X_+$ induces a partial order on $X$ that is compatible with the vector space structure: one sets~$x\le y$ for $x,y \in X$ if and only if $y-x \in X_+$. 
So in particular, $x \ge 0$ if and only if $x \in X_+$. 
For this reason, the elements of $X_+$ are called the \emph{positive} elements of $X$. 
We call the cone of $X$ \emph{generating} if $X=X_+ - X_+$.
For $x,z\in X$, the \emph{order interval} $[x,z]$ is defined as the set $\{y \in X: x\le y \le z\}$; note that $[x,z]$ is non-empty if and only if $x \le z$.
A non-empty subset $C\subseteq X_+$ is called a \emph{face} of $X_+$  if it is also a cone and if $0 \le x \le y$ in $X$ and $y \in C$ implies $x\in C$.

By an \emph{ordered Banach space}, we mean a Banach space $X$ which is also an ordered vector space whose positive cone $X_+$ is closed with respect to the norm topology. 
The cone of an ordered Banach space is called \emph{normal} if there exists a number $M\ge 1$ such that $0\le x\le y$ implies $\norm x \le M \norm y$ for all $x, y\in X$. If $X$ is an ordered Banach space, then it is easy to see that $\linSpan(X_+) = X_+ - X_+$. 
Endowing this space with the norm
\begin{equation}
    \label{eq:norm-on-span}
    \norm{x}_{\linSpan(X_+)} 
    := 
    \inf\{\norm{y} + \norm{z} : \, y,z \in X_+ \text{ and } x = y-z\}
\end{equation}
makes $\linSpan(X_+)$ an ordered Banach space and makes the embedding $\linSpan(X_+) \hookrightarrow X$ continuous \cite[Lemma~2.2]{ArendtNittka2009}. 
Moreover, on the cone $X_+$ this norm coincides with the norm on $X$. Hence, if the cone $X_+$ is normal in $X$, then it is also normal with respect to $\norm{\argument}_{\linSpan(X_+)}$.

An ordered vector space $X$ is called a \emph{vector lattice} if any two elements have a supremum (equivalently, an infimum). The positive cone of a vector lattice is always generating and each element $x\in X$ has a \emph{modulus} that is defined as $\modulus{x}:= \sup\{-x,x\}$. 
An ordered Banach space is called a \emph{Banach lattice} if it is a vector lattice and the order is compatible with the norm in the sense that
\[
    \modulus x\le \modulus y \Rightarrow \norm x \le \norm y
\]
for all $x,y \in X$.
The cone of a Banach lattice is always normal. Moreover, the elements $x^+:= \sup\{x,0\}$ and $x^-:=\sup\{-x,0\}$ satisfy $x=x^+ - x^-$, so the cone of a Banach lattice is also generating. 
If $X$ is an ordered Banach space and $\linSpan(X_+)$ is norm dense in $X$, then the so called \emph{dual wedge}  
\begin{align*}
    X'_+ 
    := 
    \{x' \in X' : \, \langle x', x \rangle \ge 0 \text{ for all } x \in X_+\}
\end{align*}
is also a cone and thus turns the dual space $X'$ into an ordered Banach space (if $\linSpan(X_+)$ is not dense in $X$, then $X'_+ \cap -X'_+$ will not be zero, so $X'$ is then only a \emph{pre-ordered Banach space}). 
If $X$ is a Banach lattice, then the dual space is a Banach lattice as well. 

A linear map $T: X \to Z$ between two ordered vector spaces $X$ and $Z$ is called \emph{bipositive} if, for all $x \in X$, one has $Tx \ge 0$ if and only if $x \ge 0$.
For ordered Banach spaces $X$ and $Z$ we denote the space of all bounded linear operators from $X$ to $Z$ by $\calL(X,Z)$ and we abbreviate $\calL(X) := \calL(X,X)$. 
A linear map $T: X \to Z$ is called \emph{positive} if $TX_+ \subseteq Z_+$. 
As a consequence of the Hahn-Banach separation theorem, this is equivalent to $T'Z'_+ \subseteq X'_+$.
By $\calL(X,Z)_+$ we denote the set of all positive operators in $\calL(X,Z)$. The identity operator on $X$ will be denoted by $\id_X$.

\section{Abstract results on ordered Banach spaces}
\label{section:abstract}

Recall that a Banach lattice $Z$ is called \emph{monotonically complete} if every increasing norm-bounded net in $Z$ (equivalently, in $Z_+$) has a supremum \cite[Definition~2.4.18]{Meyer-Nieberg1991}. 
Every $L^p$-space is monotonically complete for $p \in [1,\infty)$ and $L^\infty$-spaces over a $\sigma$-finite measure space are also monotonically complete.
Clearly, a monotonically complete Banach lattice is Dedekind complete, i.e., every order-bounded increasing net in such a space has a supremum. 
The space $c_0$ of sequences that converge to $0$ shows that the converse implication does not hold. 
A \emph{KB-space} is a Banach lattice in which every increasing norm-bounded net (equivalently, sequence) is norm-convergent.
Typical examples of KB-spaces are all reflexive Banach lattices and $L^1$-spaces (i.e., all $L^p$-spaces for $1\le p<\infty$). A subspace $I$ of a Banach lattice $Z$ is said to be a \emph{lattice ideal} if for each $x,y\in Z$, the conditions $0\le \modulus x\le \modulus y$ and $y\in I$ imply that $x\in I$.

\begin{theorem}
    \label{thm:span-lattice-dual}
    Let $X$ be an ordered Banach space with generating cone and let $Z$ be a Banach lattice.
    Assume that there exists an operator $J \in \calL(X,Z)_+$ and a sequence $(R_n) \subseteq \calL(Z,X)_+$ such that $(R_nJ) \subseteq \calL(X)$ converges to $\id_X$ with respect to the weak operator topology.
    \begin{enumerate}[\upshape (a)]
        \item 
        The space $\linSpan(X'_+) = X'_+ - X'_+$ is a monotonically complete Banach lattice with respect to a norm  equivalent to $\norm{\argument}_{\linSpan(X'_+)}$. 

        \item 
        If $X$ is reflexive, then the Banach lattice $\linSpan(X'_+)$ is a KB-space.
        
        \item 
        If the sequence $(J R_n) \subseteq \calL(Z)$ converges to $\id_Z$ with respect to the weak operator topology, then $J'$ is a lattice homomorphism from $Z'$ to $\linSpan(X'_+)$ and $J'Z'$ is a lattice ideal in $\linSpan(X'_+)$.
    \end{enumerate}
\end{theorem}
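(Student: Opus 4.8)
The plan is to dualise the whole situation and work inside $X'$. Passing to adjoints, $J' \in \calL(Z',X')_+$ and $R_n' \in \calL(X',Z')_+$ are positive, and the weak-operator convergence $R_nJ \to \id_X$ dualises to $J'R_n' = (R_nJ)' \to \id_{X'}$ in the weak-$*$ topology $\sigma(X',X)$. Two preliminary facts will drive everything. First, since $X_+$ is closed and generating, the open mapping theorem makes $\norm{\argument}_{\linSpan(X_+)}$ equivalent to $\norm{\argument}_X$, and a routine duality argument then shows that $X'_+$ is \emph{normal} in $X'$; consequently the embedding $\linSpan(X'_+) \hookrightarrow X'$ has a normal cone as recorded in the preliminaries. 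Second, $X'_+$ is weak-$*$ closed (it is a dual wedge), so by normality every order interval $[0,\psi]$ is norm-bounded and weak-$*$ closed, hence weak-$*$ compact by Banach--Alaoglu. These compactness and normality properties are what replace any (unavailable) uniform bound on $\norm{R_n}$.

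For part~(a) I would construct the positive part of an arbitrary $\phi = \phi_1 - \phi_2 \in \linSpan(X'_+)$ (with $\phi_i \in X'_+$) as a weak-$*$ limit of the positive elements $a_n := J'(R_n'\phi)^+ \in X'_+$. Since $(R_n'\phi)^+ \le R_n'\phi_1$ in $Z'$, we get $0 \le a_n \le J'R_n'\phi_1$, and as $J'R_n'\phi_1 \to \phi_1$ weak-$*$ the sequence $(J'R_n'\phi_1)$ is norm-bounded by uniform boundedness; normality of $X'_+$ then bounds $(a_n)$, so it has weak-$*$ limit points inside a fixed order interval. Any such limit $a$ satisfies $a \ge 0$ and $a \ge \phi$ (because $a_n - b_n = J'R_n'\phi \to \phi$, where $b_n := J'(R_n'\phi)^-$). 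The crux, and the step I expect to be the main obstacle, is \emph{minimality}: I would show $a \le u$ for every upper bound $u$ of $\{0,\phi\}$ by invoking the Riesz--Kantorovich formula $\duality{(R_n'\phi)^+}{Jx} = \sup\{\duality{R_n'\phi}{y} : 0 \le y \le Jx\}$ for $x \in X_+$, rewriting $\duality{R_n'\phi}{y} = \duality{\phi}{R_ny} \le \duality{u}{R_ny} \le \duality{u}{R_nJx}$ (using $R_ny \ge 0$ and $\phi \le u$), and then passing to the limit via $R_nJx \to x$. This forces $\duality{a}{x} \le \duality{u}{x}$ for all $x \in X_+$, so $a = \sup\{0,\phi\} =: \phi^+$ is the genuine least upper bound; uniqueness of the limit point also upgrades the convergence to the full sequence. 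Hence $\linSpan(X'_+)$ is a vector lattice. To obtain a lattice norm equivalent to $\norm{\argument}_{\linSpan(X'_+)}$ I would pass to the monotone envelope $\norm{\phi}_{\mathrm{m}} := \inf\{\norm{w}_{X'} : w \ge \phi,\ w \ge -\phi\}$; normality and the generating property give $\norm{\phi}_{\mathrm{m}} \le \norm{\phi}_{\linSpan(X'_+)} \le 2M\norm{\phi}_{\mathrm{m}}$, and because the admissible $w$ are exactly those with $w \ge \modulus{\phi}$ one checks $\norm{\phi}_{\mathrm{m}} = \norm{\modulus{\phi}}_{\mathrm{m}}$, so $\norm{\argument}_{\mathrm{m}}$ is a (complete) lattice norm. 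Finally, monotone completeness follows directly: a $\norm{\argument}_{\mathrm{m}}$-bounded increasing net in $X'_+$ is norm-bounded in $X'$, and testing against $X_+$ (generating) identifies its weak-$*$ limit, which one verifies is its supremum.

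For part~(b), with $X$ reflexive I would prove that $\norm{\argument}_{\mathrm{m}}$ is order continuous and then conclude via the standard fact that a monotonically complete Banach lattice with order continuous norm is a KB-space \cite{Meyer-Nieberg1991}. Given a net $\phi_\alpha \downarrow 0$, it lies in a weak-$*$ compact order interval and its weak-$*$ limit is its infimum $0$; as $X'$ is reflexive, weak-$*$ equals weak, so $\phi_\alpha \to 0$ weakly in $X'$. Mazur's theorem yields a convex combination $c = \sum \lambda_i \phi_{\alpha_i}$ with $\norm{c}_{X'}$ arbitrarily small; choosing $\beta$ dominating all $\alpha_i$ gives $0 \le \phi_\beta \le c$, whence normality forces $\norm{\phi_\beta}_{X'}$, and thus $\norm{\phi_\beta}_{\mathrm{m}}$, small. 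Order continuity follows.

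For part~(c), the extra hypothesis dualises to $R_n'J' = (JR_n)' \to \id_{Z'}$ weak-$*$. To see $J'$ is a lattice homomorphism it suffices to show $(J'\psi)^+ = J'(\psi^+)$ for $\psi \in Z'$. The inequality ``$\le$'' is immediate from positivity, since $J'\psi^+$ is an upper bound of $\{J'\psi,0\}$. For ``$\ge$'' I would use the representation $(J'\psi)^+ = \lim_n J'(R_n'J'\psi)^+$ from part~(a) together with the Riesz--Kantorovich formula: writing $\psi_n := R_n'J'\psi \to \psi$ weak-$*$, lower semicontinuity of $\psi \mapsto \duality{\psi^+}{Jx} = \sup\{\duality{\psi}{y} : 0 \le y \le Jx\}$ gives $\duality{(J'\psi)^+}{x} = \lim_n \duality{\psi_n^+}{Jx} \ge \duality{\psi^+}{Jx} = \duality{J'\psi^+}{x}$ for all $x \in X_+$. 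For the ideal property, take $0 \le \phi \le J'\psi$ with (after replacing $\psi$ by $\modulus{\psi}$, using that $J'$ is now a homomorphism) $\psi \in Z'_+$. Then $0 \le R_n'\phi \le R_n'J'\psi$, and since $R_n'J'\psi \to \psi$ weak-$*$ is norm-bounded, normality of $Z'_+$ bounds $(R_n'\phi)$ in $Z'$; any weak-$*$ limit point $\eta \in Z'_+$ satisfies $J'\eta = \lim J'R_n'\phi = \phi$ by weak-$*$--weak-$*$ continuity of $J'$, so $\phi \in J'Z'$.
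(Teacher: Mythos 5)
Your proposal is correct in all three parts; the substance of (a) and (b) matches the paper, while your (c) takes a genuinely different route. In (a) you build the supremum exactly as the paper does -- as a weak${}^*$ cluster point of $J'$ applied to lattice operations of $R_n'\phi$ in $Z'$, with normality of $X'_+$ (from the generating cone in $X$) supplying the bounds -- the only differences being cosmetic: you work with positive parts and verify minimality through the Riesz--Kantorovich formula, where the paper simply uses $\pm R_n' x' \le R_n' u'$ to get $\modulus{R_n'x'} \le R_n'u'$ and applies $J'$; and you renorm via $\norm{\phi}_{\mathrm{m}} = \inf\{\norm{w}_{X'} : w \ge \pm\phi\}$, where the paper uses the dual-flavoured norm $\sup\{\norm{w} : 0 \le w \le \modulus{\phi}\}$ of Lemma~\ref{lem:lattice-with-normal-cone} (both are standard and both work). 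In (b) the paper cites the classical fact that in a reflexive space with normal cone increasing norm-bounded nets converge in norm and then applies Lemma~\ref{lem:convergence-in-span}; your Mazur-theorem argument essentially re-proves that cited fact and routes the conclusion through ``monotonically complete $+$ order continuous $\Rightarrow$ KB'', which is equivalent in substance.

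Part (c) is where you genuinely diverge, and your argument is sound. The paper first shows $J'$ is bipositive, then proves that $J'Z'_+$ is a \emph{face} in $X'_+$ by constructing a preimage functional through a Hahn--Banach-type positive extension theorem, and finally invokes Lemma~\ref{lem:face-vector-lattice}. You instead prove the homomorphism identity $(J'\psi)^+ = J'(\psi^+)$ directly, combining the representation of positive parts from (a) with weak${}^*$ lower semicontinuity of $\eta \mapsto \sup\{\duality{\eta}{y} : 0 \le y \le Jx\}$, and you obtain the ideal property by a weak${}^*$-compactness argument: the dominated functional $\phi$ is recovered as $J'\eta$ for a cluster point $\eta$ of the bounded sequence $(R_n'\phi)$, using normality of $Z'_+$ and weak${}^*$-continuity of $J'$. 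What the paper's route buys is reusable structure (the face property and the extension lemma come from their earlier work and have independent interest); what yours buys is self-containedness -- no extension theorem, and in fact no need for bipositivity or injectivity of $J'$ at all, since your cluster-point construction produces the required preimages outright. Note that your ideal argument also yields the face property of $J'Z'_+$ as a byproduct, so the two proofs are interderivable.
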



The proof of Theorem~\ref{thm:span-lattice-dual} requires some preparation that we outsource to Lemmata~\ref{lem:convergence-in-span}, \ref{lem:face-vector-lattice}, and~\ref{lem:lattice-with-normal-cone}.
Note that, if a sequence in $X_+$ converges in $X$, then it may not necessarily converge with respect to the stronger norm $\norm{\argument}_{\linSpan(X_+)}$ as well. 
However, this is the case for increasing sequences. This observation is the content of the following lemma.

\begin{lemma}
    \label{lem:convergence-in-span}
    Let $X$ be an ordered Banach space and let $(x_j)$ be an increasing net of elements in $\linSpan(X_+)$. If $(x_j)$ is norm-convergent in $X$, then its limit is also in $\linSpan(X_+)$ and the net is even convergent with respect to $\norm{\argument}_{\linSpan(X_+)}$.
\end{lemma}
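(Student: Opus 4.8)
The plan is to show that the combination of monotonicity and convergence forces each difference $x - x_j$ between the limit $x$ and a net element to lie in the cone $X_+$ itself; once this is established, both assertions follow almost immediately, because on $X_+$ the span-norm $\norm{\argument}_{\linSpan(X_+)}$ coincides with the ambient norm of $X$ (a fact already recorded above).

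First I would fix an index $j_0$ and exploit that the net is increasing: for every index $k \ge j_0$ one has $x_k - x_{j_0} \in X_+$. Letting $x$ denote the limit of $(x_j)$ in $X$, the tail $(x_k - x_{j_0})_{k \ge j_0}$ converges in norm to $x - x_{j_0}$, and since $X_+$ is norm-closed by the definition of an ordered Banach space, the limit $x - x_{j_0}$ again lies in $X_+$. As $j_0$ was arbitrary, this yields $x - x_j \in X_+$ for every index $j$; in particular $x = (x - x_j) + x_j$ is the sum of an element of $X_+ \subseteq \linSpan(X_+)$ and the element $x_j \in \linSpan(X_+)$, so $x \in \linSpan(X_+)$, which settles the first claim.

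For the convergence assertion I would use that each difference $x - x_j$ is now a \emph{positive} element. Since the two norms agree on the cone, $\norm{x - x_j}_{\linSpan(X_+)} = \norm{x - x_j}$, and the right-hand side tends to $0$ because $(x_j)$ converges to $x$ in $X$ by hypothesis. Hence $(x_j)$ converges to $x$ with respect to $\norm{\argument}_{\linSpan(X_+)}$ as well.

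I do not expect a genuine obstacle: the only point requiring a little care is that the passage to the limit inside the closed cone is carried out along a tail of the net rather than along a sequence, but this is immediate from the definition of net convergence. The conceptual content lies entirely in the observation that an increasing, norm-convergent net has its limit above all of its members, which turns the differences into positive elements and thereby collapses the stronger span-norm to the ambient norm.
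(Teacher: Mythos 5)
Your proposal is correct and follows essentially the same route as the paper's proof: both arguments note that monotonicity plus norm convergence gives $x - x_j \in X_+$ for every $j$ (you spell out the tail-plus-closed-cone argument that the paper leaves implicit), deduce $x \in \linSpan(X_+)$, and then conclude via the fact that the span-norm and the ambient norm agree on the cone. No gaps.
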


\begin{proof}
    Let $(x_j)$ be norm convergent in $X$, to say $x$. Since $(x_j)$ is increasing, $x-x_j\in X_+$ for each index $j$.  
    It thus follows from the assumption $x_j \in \linSpan(X_+)$ that $x \in \linSpan(X_+)$, too. 
    As the norm on $X$ and the norm on $\linSpan(X_+)$ coincide on $X_+$, we obtain 
    \[
        \norm{x-x_j}_{\linSpan(X_+)} = \norm{x-x_j}_X \to 0
    \]
    as claimed. 
\end{proof}

As a second ingredient for the proof of Theorem~\ref{thm:span-lattice-dual} we use the following simple lemma about bipositive maps between vector lattices. 

\begin{lemma}
    \label{lem:face-vector-lattice} 
    Let $X, Z$ be vector lattices and let $J: X \to Z$ be a bipositive linear map. 
    If $JX_+$ is a face in $Z_+$, then $J$ is a lattice homomorphism and  $JX$ is a lattice ideal in $Z$.
\end{lemma}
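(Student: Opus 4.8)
The plan is to first record that bipositivity forces $J$ to be injective---indeed, if $Jx = 0$ then $Jx \ge 0$ and $-Jx = J(-x) \ge 0$, so $x \ge 0$ and $-x \ge 0$, whence $x = 0$---and then to establish the two assertions essentially independently: the lattice homomorphism property will rest on a disjointness-preservation argument, while the lattice ideal property follows almost directly from the face hypothesis.

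For the lattice homomorphism property I would invoke the standard criterion that a positive operator between vector lattices is a lattice homomorphism precisely when it preserves disjointness of positive elements; concretely, it suffices to show that $x, y \in X_+$ with $\inf\{x,y\} = 0$ implies $\inf\{Jx, Jy\} = 0$. This is where the face hypothesis does the real work, and I expect it to be the main step. Given such $x, y$, set $w := \inf\{Jx, Jy\}$, which exists since $Z$ is a vector lattice and satisfies $0 \le w \le Jx \in JX_+$; the face property then yields some $u \in X_+$ with $Ju = w$. From $Ju \le Jx$ and $Ju \le Jy$ together with bipositivity I obtain $u \le x$ and $u \le y$, hence $0 \le u \le \inf\{x,y\} = 0$, so $u = 0$ and therefore $w = 0$. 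Applying this to $x = a^+$ and $y = a^-$ for arbitrary $a \in X$ gives $\inf\{Ja^+, Ja^-\} = 0$; since $Ja^+, Ja^- \ge 0$, uniqueness of the disjoint decomposition identifies them as the positive and negative parts of $Ja$, so that $\modulus{Ja} = Ja^+ + Ja^- = J\modulus{a}$, which is exactly the lattice homomorphism property.

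For the lattice ideal property I would argue directly from the definition. As $J$ is linear, $JX$ is a subspace, so it remains to verify that $0 \le z \le w$ with $w \in JX$ forces $z \in JX$. Writing $w = Jx$, the positivity of $w$ combined with bipositivity gives $x \ge 0$, hence $w \in JX_+$; then $0 \le z \le w \in JX_+$ and the face property yield $z \in JX_+ \subseteq JX$. The single mechanism driving both conclusions is the idea of pulling an infimum (or an order interval) in $Z$ back along $J$ via the face condition and then transporting the resulting inequalities back to $X$ through bipositivity; this is the main obstacle, and once it is in place both statements follow with little further effort.
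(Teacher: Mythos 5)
Your proof is correct, and it reaches the conclusion by a somewhat different route than the paper. The paper argues directly with the modulus: for $x \in X$ one has $0 \le \modulus{Jx} \le J\modulus{x}$, the face property yields $y \in X_+$ with $Jy = \modulus{Jx}$, and bipositivity then squeezes $\pm x \le y \le \modulus{x}$, forcing $y = \modulus{x}$; this gives $\modulus{Jx} = J\modulus{x}$ in a single application of the face hypothesis, with no auxiliary lemmas. You instead apply the same face-plus-bipositivity mechanism to $\inf\{Jx,Jy\}$ for disjoint $x,y \in X_+$, obtaining disjointness preservation, and then need the (standard) uniqueness of the decomposition of an element into disjoint positive and negative parts to convert this into $\modulus{Ja} = J\modulus{a}$. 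Both proofs ultimately rest on the same characterization of lattice homomorphisms via preservation of moduli; yours is one step longer but isolates a reusable principle, namely that a positive disjointness-preserving operator between vector lattices is a lattice homomorphism, whereas the paper's is more economical. For the ideal property the two arguments essentially coincide: with the paper's definition of a lattice ideal (the condition $0 \le x \le y$, $y \in I$ implies $x \in I$), bipositivity plus the face property give the claim directly, as you do; and since you also show that $J$ is a lattice homomorphism, $JX$ is a sublattice of $Z$, so your argument covers equally the stronger, more common definition of ideal via $\modulus{x} \le \modulus{y}$.
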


\begin{proof}
    Let $x \in X$. 
    We have $0\le\modulus{Jx} \le J \modulus{x}$ due to the positivity of $J$. 
    Therefore, using that $JX_+$ is a face in $Z_+$, 
    we obtain a $y \in X_+$ such that $\modulus{Jx} = Jy$. 
    Hence, 
    \[
        \pm Jx \le Jy \le J \modulus{x}.
    \]
    Since $J$ is bipositive this implies that $\pm x \le y \le \modulus{x}$, so actually $y = \modulus{x}$. 
    In turn, $\modulus{Jx} = Jy = J\modulus{x}$. 
    So $J$ is indeed a lattice homomorphism.

    Consequently, $JX$ is a vector sublattice of $Z$ and by using again that $JX_+$ is a face in $Z_+$, we get that $JX$ is even a lattice ideal in $Z$.
\end{proof}

Finally, we also need the following lemma about renorming of ordered Banach spaces that are vector lattices. 
The lemma seems to be a folklore result that is known to experts in vector lattice theory and is used on various occasions in the literature, but it is hard to find a detailed proof of it. 
Since the proof is a bit less obvious than one might expect at first glance, we include the details.

\begin{lemma}
    \label{lem:lattice-with-normal-cone}
    Let $X$ be an ordered Banach space that is also a vector lattice (with respect to the same order) and assume that the cone $X_+$ is normal. 
    Then there exists an equivalent norm on $X$ that renders $X$ a Banach lattice.
\end{lemma}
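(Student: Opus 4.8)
The plan is to exhibit an explicit equivalent norm and then verify that it is a lattice norm. Since $X$ is a vector lattice, its cone $X_+$ is generating, and since it is moreover closed in the Banach space $X$, the decomposition norm $\norm{\argument}_{\linSpan(X_+)}$ from \eqref{eq:norm-on-span} is not merely larger than $\norm{\argument}$ but actually equivalent to it: the continuous bijection $\linSpan(X_+)\hookrightarrow X$ of \cite[Lemma~2.2]{ArendtNittka2009} is, by the open mapping theorem, a homeomorphism. Hence there is a constant $C\ge 1$ such that every $x\in X$ admits a decomposition $x=u-v$ with $u,v\in X_+$ and $\norm{u}+\norm{v}\le C\norm{x}$.

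The first key step is to combine this bounded decomposition with normality to control the lattice operations, i.e.\ to show that $\norm{\modulus{x}}\le MC\norm{x}$ for all $x$, where $M$ denotes the normality constant. Indeed, writing $x=u-v$ as above one has $0\le\modulus{x}\le u+v$, so normality gives $\norm{\modulus{x}}\le M\norm{u+v}\le MC\norm{x}$. I regard this as the main obstacle: it is exactly the point where completeness enters (through the open mapping theorem), and it is what the bare hypotheses conceal, since a normal cone by itself does not visibly force the lattice operations to be continuous.

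With boundedness of the modulus established, I would define
\[
  p(x):=\inf\{\norm{z}:z\in X,\ z\ge\modulus{x}\}
\]
and check the norm axioms (positive homogeneity is clear; subadditivity follows because $z_1\ge\modulus{x}$ and $z_2\ge\modulus{y}$ force $z_1+z_2\ge\modulus{x}+\modulus{y}\ge\modulus{x+y}$; definiteness uses that $X_+$ is closed, so $p(x)=0$ forces $\modulus{x}\le 0$ and hence $x=0$). The decisive property is that $p$ is a lattice norm: if $\modulus{x}\le\modulus{y}$, then every $z\ge\modulus{y}$ also satisfies $z\ge\modulus{x}$, so the infimum defining $p(x)$ ranges over a larger set and $p(x)\le p(y)$.

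It remains to verify that $p$ is equivalent to $\norm{\argument}$. The upper estimate is immediate from the previous step, since the choice $z=\modulus{x}$ gives $p(x)\le\norm{\modulus{x}}\le MC\norm{x}$. For the lower estimate, any $z\ge\modulus{x}$ satisfies $0\le x^{\pm}\le\modulus{x}\le z$, so normality yields $\norm{x^{\pm}}\le M\norm{z}$ and therefore $\norm{x}\le\norm{x^+}+\norm{x^-}\le 2M\norm{z}$; taking the infimum over all admissible $z$ gives $\norm{x}\le 2M\,p(x)$. Thus $p$ is equivalent to the original norm, whence $(X,p)$ is complete, and being a vector lattice endowed with a lattice norm it is a Banach lattice.
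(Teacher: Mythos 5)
Your proof is correct, and while it follows the same overall skeleton as the paper's (exhibit an explicit equivalent norm, verify the norm axioms, equivalence, and monotonicity), the norm you construct is genuinely different: the paper uses the supremum over dominated elements, $\blNorm{x} = \sup \{ \norm{w} : 0 \le w \le \modulus{x} \}$, whereas you use the infimum over dominating elements, $p(x)=\inf\{\norm{z}: z\ge \modulus{x}\}$. Both proofs rest on the same two external inputs — the normality constant $M$ and the bounded-decomposition constant $C$ for a closed generating cone in a Banach space (the paper cites \cite[Theorem~2.37]{AliprantisTourky2007}; your route through \cite[Lemma~2.2]{ArendtNittka2009} and the open mapping theorem is the same fact, and you correctly identify this as the place where completeness is essential). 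The differences in the verification are instructive: the paper's supremum needs normality merely to be finite, and its triangle inequality requires the Riesz decomposition property $[0,x+y]=[0,x]+[0,y]$; your infimum is trivially finite (take $z=\modulus{x}$), its subadditivity follows formally from $\modulus{x+y}\le\modulus{x}+\modulus{y}$, and its definiteness is where closedness of $X_+$ enters, with normality and the decomposition constant confined entirely to the equivalence estimates $\tfrac{1}{2M}\norm{x}\le p(x)\le MC\norm{x}$. So your construction is somewhat more economical in its use of lattice structure, at the modest price of needing the closedness of the cone for definiteness; both yield the desired equivalent lattice norm.
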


\begin{proof}
    Let $\norm{\argument}$ denote the given norm on $X$. 
    Since $X_+$ is normal there exists, by the definition of normality, a real number $M \ge 1$ such that $\norm{x} \le M\norm{y}$ for all $x,y \in X$ that satisfy $0 \le x \le y$.
    Moreover, as $X$ is a vector lattice its cone is generating, so there exists a number $C \ge 1$ such that each $x \in X$ can be decomposed as $x = y-z$ for vectors $y,z \in X_+$ that satisfy $\norm{x}, \norm{z} \le C\norm{x}$; see \cite[Theorem~2.37(1) and~(3)]{AliprantisTourky2007}.
    We define
    \begin{align*}
        \blNorm{x} 
        := 
        \sup \big\{ \norm{w} : \, 0 \le w \le \modulus{x} \big\}
    \end{align*}
    for each $x \in X$. 
    The supremum is finite due to the normality of the cone. 
    We now check that $\blNorm{\argument}$ is a norm with the claimed properties.
    
    \emph{The mapping $\blNorm{\argument}$ is indeed a norm}: 
    Clearly, $\blNorm{0} = 0$. 
    If, conversely, $\blNorm{x} = 0$ for a vector $x \in X$, 
    then it follows from $\norm{\modulus{x}} \le \blNorm{x} = 0$ that $\modulus{x} = 0$, so $x = 0$.
    For each $x \in X$ one easily checks that $\blNorm{\alpha x} = \alpha \blNorm{x}$ for all $\alpha \in [0,\infty)$ and that $\blNorm{-x} = \blNorm{x}$, so absolute homogeneity also holds. 
    The triangle inequality can be checked by using the Riesz decomposition property, i.e., the equality $[0,x+y] = [0,x] + [0,y]$ for all $x,y \in X_+$, which holds in every vector lattice \cite[Proposition~II.1.6]{Schaefer1974}.
    
    \emph{The norm $\blNorm{\argument}$ is equivalent to $\norm{\argument}$}:
    Let $x \in X$. 
    We have $x = x^+ - x^-$, so it follows from $0 \le x^+ \le \modulus{x}$ and $0 \le x^- \le \modulus{x}$ that 
    \begin{align*}
        \norm{x}
        \le
        \norm{x^+} + \norm{x^-} 
        \le 
        2M \norm{\modulus{x}}
        \le 
        2M \blNorm{x}
        .
    \end{align*}
    On the other hand, as above, we can find vectors $y,z \in X_+$ that satisfy $x = y-z$ and $\norm{y}, \norm{z} \le C \norm{x}$. 
    One has $0 \le x^+ \le y$ and $0 \le x^- \le z$, so for all $w \in X$ that satisfy $0 \le w \le \modulus{x} = x^+ + x^-$ we get 
    \begin{align*}
        \norm{w}  
        \le 
        M \norm{ \modulus{x} }
        \le 
        M\big(\norm{x^+} + \norm{x^-}\big) 
        \le 
        M^2\big(\norm{y} + \norm{z}\big) 
        \le 
        2 M^2 C \norm{x}
        .
    \end{align*}
    Hence, $\blNorm{x} \le 2 M^2 C \norm{x}$.
    
    \emph{The space $X$ is a Banach lattice with respect to $\blNorm{\argument}$}:
    Since the norm $\blNorm{\argument}$ is equivalent to the original norm, 
    $X$ is still a Banach space with respect to $\blNorm{\argument}$. 
    If $x,y \in X$ satisfy $\modulus{x} \le \modulus{y}$, then 
    \begin{align*}
        \big\{ \norm{w} : \, 0 \le w \le \modulus{x} \big\} 
        \subseteq 
        \big\{ \norm{w} : \, 0 \le w \le \modulus{y} \big\}
    \end{align*}
    and hence, $\blNorm{x} \le \blNorm{y}$.
\end{proof}

Now we have all the ingredients available that we need in order to show Theorem~\ref{thm:span-lattice-dual}.

\begin{proof}[Proof of Theorem~\ref{thm:span-lattice-dual}]
    (a) 
    We divide the proof into two steps.
    
    \emph{Step 1: The span is a lattice.}
    Let $x' \in \linSpan(X'_+)$. 
    It suffices to prove that $\pm x'$ has a supremum in $\linSpan(X'_+)$.
    Since $Z$ is a Banach lattice, so is its dual space $Z'$. 
    Hence, we can take the modulus $\modulus{R_n'x'}$ in $Z'$ for each index $n$.
    We show that the sequence $\big(J' \modulus{R_n'x'} \big)$ is norm-bounded in $X'$. 
    First of all, because $(R_nJ)$ converges to $\id_X$ in the weak operator topology, the sequence $(J'R_n')$ converges to $\id_{X'}$ in the weak${}^*$-operator topology. In particular, it is bounded.
    Secondly, as $x' \in X'_+ - X'_+$, there exists a vector $y' \in X'_+$ such that $\pm x' \le y'$. 
    Therefore, $\pm R_n' x' \le R_n'y'$ in $Z'$ and hence, $J'\modulus{R_n' x'} \le J'R_n' y'$ in $X'$ for each $n$. 
    
    Since the cone in $X$ is generating, the cone in $X'$ is normal \cite[Theorem~2.42]{AliprantisTourky2007} and hence, the previous inequality together with boundedness of the sequence $(J'R_n')$ gives the norm-boundedness of the sequence $\big(J' \modulus{R_n'x'} \big)$ as well. 

    By the Banach-Alaoglu theorem, there is a weak${}^*$-convergent subnet $\big(J' \modulus{R_{n_j}'x'} \big)$ with limit $s'$ in $X'$. 
    As the cone $X'_+$ is weak${}^*$-closed, it follows that $s' \ge 0$.
    Let us show that $s'$ is the supremum of $\pm x'$ in $\linSpan(X'_+)$. 
    On one hand, it follows from the inequality $\pm J' R_{n_j}' x' \le J' \modulus{R_{n_j}'x'}$ for each $j$,  from the convergence of $(J'R_n')$ to $\id_{X'}$ in the weak${}^*$-operator topology, and from the weak${}^*$-closedness of $X'_+$, that $\pm x' \le s'$. 
    On the other hand, for every upper bound $u'$ of $\pm x'$ in $\linSpan(X'_+)$ -- repeating the above argument with $u'$ instead of $y'$ -- we get
    \[
        J' \modulus{ R_{n_j}' x' } 
        \le 
        J' R_{n_j}' u'
    \]
    in $X'$ for each index $j$.
    Employing again the convergence of $(J'R_n')$ in the weak${}^*$-operator topology and the weak${}^*$-closed\-ness of $X'_+$, it thus follows that $ s' \le u'$. 
    So $s'$ is indeed the least upper bound of $\pm x'$ in $\linSpan(X'_+)$.

    \emph{Step 2: The span is a monotonically complete Banach lattice.} 
    As noted in Step~1, the cone in $X'$ is normal.
    Hence, $X'_+$ is also normal with respect to $\norm{\argument}_{\linSpan(X'_+)}$.
    Therefore, $\linSpan(X'_+)$ is an ordered Banach space with a normal cone and a vector lattice at the same time. 
    Hence, Lemma~\ref{lem:lattice-with-normal-cone} shows that there exists an equivalent norm that turns $\linSpan(X'_+)$ into a Banach lattice. 
    To show that $\linSpan(X'_+)$ is monotonically complete, consider an increasing net $(x'_j)$ in $X'_+$ that is norm-bounded in $\linSpan(X'_+)$. We need to show that it has a supremum in $\linSpan(X'_+)$. 
    The net $(x'_j)$ is in particular norm-bounded in $X'$ and hence, by Banach-Alaoglu, it has a subnet that converges weak${}^*$ to $x'\in X'_+$. Actually, since $(x'_j)$
    is increasing, it even weak${}^*$-converges itself to $x' \in X'_+$. 
    It is easy to check that $x'$ is the supremum of $(x'_j)$ in $X'$ and hence in $\linSpan(X'_+)$. 

    (b)
    If $X$ is reflexive, so is $X'$. Moreover, as noted above the cone of $X'_+$ is normal. Therefore, every increasing norm bounded net in $X'_+$  is norm convergent in $X'$ \cite[Theorem~2.45]{AliprantisTourky2007}. Applying Lemma~\ref{lem:convergence-in-span}, we immediately obtain that every increasing norm bounded net in $X'_+$ is also norm convergent with respect to $\norm{\argument}_{\linSpan(X'_+)}$. In other words, $\linSpan(X'_+)$ is a KB-space.
    
    (c) 
    We begin with the preliminary observations that $J$ is injective and bipositive and that $J'$ is bipositive as well. 
    Indeed, first note that the bipositivity of $J$ holds since
    if $x \in X$ and $Jx \ge 0$, then $x$ is the weak limit of the sequence $(R_n J x) \subseteq X_+$ and hence, $x \ge 0$. 
    Being a bipositive bounded operator between ordered Banach spaces, $J$ is injective \cite[Proposition~2.1(a)]{AroraGlueckPaunonenSchwenninger2024}.
    To see the bipositivity of $J'$, note that 
    due to the assumption of~(c), the net $(R_n' J') \subseteq \calL(Z')$ converges to $\id_{Z'}$ with respect to the weak${}^*$-operator topology. 
    So if $z' \in Z'$ and $J'z' \ge 0$, then $z'$ is positive since it is the weak${}^*$-limit of $(R_n'J'z')$ and since $Z'_+$ is weak${}^*$-closed.

    Next, we show that $J'(Z'_+)$ is a face in $X'_+$; 
    as by Lemma~\ref{lem:face-vector-lattice} and the bipositivity of $J'$ this implies~(c).
    We roughly follow the arguments recently given in \cite[Theorem~2.5]{AroraGlueckPaunonenSchwenninger2024} where a related result was shown.
    Let $z' \in Z'_+$ and $x' \in X'_+$ satisfy $0 \le x' \le J' z'$; we have to find a functional $y' \in Z'_+$ that satisfies $J'y' = x'$ or, in other words, $\duality{y'}{Jx} = \duality{x'}{x}$ for all $x \in X$.
    
    To this end consider the linear map $\varphi: JX \to \bbR$ that is given by $\varphi(z) = \duality{x'}{J^{-1}z}$ for all $z \in JX$. 
    Observe that $\varphi$ is well-defined since $J$ is injective and it is positive since $J$ is bipositive and $x'$ is positive. 
    We intend to extend $\varphi$ to a functional $y' \in Z'_+$; 
    for such an extension $y'$ to exist it is, according to the extension result in \cite[Proposition~2.4]{AroraGlueckPaunonenSchwenninger2024}, sufficient for $\varphi(v) \le \langle z', w\rangle$ to hold for all $v \in JX$ and all $w \in Z_+$ that satisfy $v \le w$. 
    So consider such $v$ and $w$ and observe that indeed 
    \begin{align*}
        \varphi(v) 
        & = 
        \duality{x'}{J^{-1}v} 
        = 
        \lim_n \duality{x'}{R_n v} 
        \\
        & \le 
        \limsup_n \duality{x'}{R_n w}
        \le 
        \limsup_n \duality{J'z'}{R_n w}
        = 
        \duality{z'}{w}
        ;
    \end{align*}
    the second equality uses the weak operator convergence of $(R_n J)$ to $\id_X$,  
    the first inequality uses the positivity of $x'$, the second inequality the positivity of $w$, 
    and the last equality the weak operator convergence of $(J R_n)$ to $\id_Z$. 
    So \cite[Proposition~2.4]{AroraGlueckPaunonenSchwenninger2024} gives us a functional $y' \in Z'_+$ that extends $\varphi$. 
    The definition of $\varphi$ now readily gives that $J'y' = x'$, so $J'Z'_+$ is indeed a face in $X'_+$.
\end{proof}

\begin{remark}
    From the proof of Theorem~\ref{thm:span-lattice-dual}(b), we see that the assumption that $X$ is reflexive in Theorem~\ref{thm:span-lattice-dual}(b) can be replaced with the weaker assumption that every increasing norm bounded net in $X'_+$ is norm convergent. 
    
    This assumption is, for instance, satisfied if $X_+$ has non-empty interior, since then there exists an equivalent norm on $X'$ that is additive on the positive cone, which in turn gives that every increasing norm bounded net in $X'_+$ is a Cauchy net.
\end{remark}

Let us also explicitly state a version of Theorem~\ref{thm:span-lattice-dual} where the conclusion refers to the spaces themselves rather than to their duals. 
Due to the dualization, the roles of $X$ and $Z$ in the assumptions of the following corollary are swapped compared to Theorem~\ref{thm:span-lattice-dual}.

\begin{corollary}
    \label{cor:span-reflexive}
    Let $X$ be a Banach lattice and let $Z$ be a reflexive ordered Banach space with a normal cone. 
    Assume that there exists an operator $J \in \calL(X,Z)_+$ and a sequence $(R_n) \subseteq \calL(Z,X)_+$ such that $(JR_n) \subseteq \calL(Z)$ converges to $\id_Z$ with respect to the weak operator topology.
    Then $\linSpan(Z_+) = Z_+ - Z_+$ is a KB-space with respect to a norm equivalent to $\norm{\argument}_{\linSpan(Z_+)}$.
\end{corollary}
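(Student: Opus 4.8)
The plan is to deduce the corollary from Theorem~\ref{thm:span-lattice-dual} by passing to adjoints and then using the reflexivity of $Z$ to return from the bidual to the space itself. Concretely, I would apply Theorem~\ref{thm:span-lattice-dual} with the Banach lattice $X'$ in the role of ``$Z$'' and the dual space $Z'$ in the role of ``$X$''. Before doing so it is convenient to assume that $\linSpan(Z_+)$ is dense in $Z$, so that the dual wedge $Z'_+$ is a genuine (proper) cone and $Z'$ is an ordered Banach space in the sense required by the theorem; this is harmless because replacing $Z$ by the closed subspace $\overline{\linSpan(Z_+)}$ changes neither $\linSpan(Z_+)$ nor the norm $\norm{\argument}_{\linSpan(Z_+)}$, and the restricted operators still satisfy all hypotheses (note that $J$ maps $X = X_+ - X_+$ into $Z_+ - Z_+ \subseteq \overline{\linSpan(Z_+)}$, and reflexivity and normality pass to closed subspaces).

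To see that the hypotheses of Theorem~\ref{thm:span-lattice-dual} hold in this dualized setting, I would first note that $X'$ is a Banach lattice because $X$ is, and that the cone $Z'_+$ is generating because $Z_+$ is normal; this is the duality statement complementary to the implication ``generating cone $\Rightarrow$ normal dual cone'' already invoked in the proof of Theorem~\ref{thm:span-lattice-dual} (see \cite[Theorem~2.42]{AliprantisTourky2007} and the surrounding duality theory). The required operators are obtained simply by dualization: set $\tilde J := J' \in \calL(Z', X')_+$ and $\tilde R_n := R_n' \in \calL(X', Z')_+$, which are positive since the adjoint of a positive operator is positive.

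The crucial step is to transfer the convergence assumption correctly, and this is where reflexivity of $Z$ is used essentially. Since $Z$ is reflexive, we may identify $Z'' = Z$, and consequently the weak operator topology and the weak${}^*$-operator topology on $\calL(Z')$ coincide. The adjoint of a weak-operator-convergent net converges in the weak${}^*$-operator topology to the adjoint of the limit, so the hypothesis $(JR_n) \to \id_Z$ in the weak operator topology yields $\tilde R_n \tilde J = R_n' J' = (JR_n)' \to \id_{Z'}$ in the weak operator topology of $\calL(Z')$. This is exactly the convergence hypothesis of Theorem~\ref{thm:span-lattice-dual} for the pair $(Z', X')$.

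With the hypotheses verified, Theorem~\ref{thm:span-lattice-dual}(a) shows that $\linSpan\big((Z')'_+\big) = \linSpan(Z''_+)$ is a monotonically complete Banach lattice under an equivalent norm, and since $Z'$ is reflexive (as $Z$ is), Theorem~\ref{thm:span-lattice-dual}(b) upgrades this to a KB-space. It then remains to translate the statement back to $Z$: because $Z_+$ is closed and $Z$ is reflexive, the bipolar theorem gives $Z''_+ = Z_+$ under the canonical embedding $Z \hookrightarrow Z''$, which is an isometric order isomorphism; hence it restricts to an isometric lattice isomorphism between $\linSpan(Z_+)$ and $\linSpan(Z''_+)$ intertwining the two span-norms, and transporting the KB-space norm back along it yields the claim. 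I expect the only genuine obstacle to be bookkeeping rather than a new idea, namely ensuring that $Z'$ is a bona fide ordered Banach space (handled by the reduction to $\overline{\linSpan(Z_+)}$ above) and dualizing the operator topologies precisely, since it is exactly the reflexivity of $Z$ that converts the weak${}^*$-convergence of the adjoints back into the weak convergence demanded by Theorem~\ref{thm:span-lattice-dual}.
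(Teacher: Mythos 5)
Your proposal is correct and takes essentially the same route as the paper's proof: dualize to the pair $(Z',X')$ with the adjoint operators $J'$ and $R_n'$, note that $Z'_+$ is generating since $Z_+$ is normal, use reflexivity of $Z$ to identify the weak${}^*$-operator and weak operator topologies on $\calL(Z')$, apply Theorem~\ref{thm:span-lattice-dual}(b), and identify $\linSpan(Z''_+)$ with $\linSpan(Z_+)$. The only cosmetic difference is in the bookkeeping that makes $Z'$ an ordered Banach space: you pass to the closed subspace $\overline{\linSpan(Z_+)}$, whereas the paper observes directly that $\linSpan(Z_+)$ is dense in $Z$ because $J(R_n z)^+ - J(R_n z)^- \to z$ weakly for each $z \in Z$.
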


\begin{proof}
    The dual space $\tilde Z := X'$ is a Banach lattice. Moreover, for each $z \in Z$ we have $J(R_nz)^+ - J(R_nz)^- \to z$ weakly, so the span of the cone $Z_+$ is dense in $Z$. Hence, the dual wedge $\tilde X_+ = Z'_+$ is actually a cone in $\tilde X:= Z'$ and therefore $\tilde X$ is an ordered Banach space. Note that 
    $\tilde X_+$ is generating because the cone in $Z$ is normal \cite[Corollary~2.43]{AliprantisTourky2007}. 
    
    Further, the sequence $( R_n'J') \subseteq \calL(Z')$ converges to $\id_{Z'}$ with respect to the weak${}^*$-operator topology. 
    Since $Z'$ is reflexive, this topology coincides with the weak operator topology. 
    Applying Theorem~\ref{thm:span-lattice-dual}(b) to the spaces $\tilde X$ and $\tilde Z$ and the operators $J'$ and $R_n'$, we conclude $\linSpan(\tilde X'_+) = \linSpan(Z_+)$ is a KB-space.
\end{proof}

Even though surprising, $\linSpan(Z_+)$ need not be reflexive in Corollary~\ref{cor:span-reflexive}; see Example~\ref{exa:neumann-laplace}. 
This also shows that $\linSpan(X'_+)$ need not be reflexive in Theorem~\ref{thm:span-lattice-dual} even if $X$ is so.

\begin{remark}
    In the situation of Corollary~\ref{cor:span-reflexive} assume in addition that $X$ is reflexive and that $R_n J \to \id_X$ with respect to the weak operator topology. 
    Then the sequence $(J' R_n') \subseteq \calL(X')$ also converges to $\id_{X'}$ with respect to the weak operator topology. 
    So it follows from Theorem~\ref{thm:span-lattice-dual}(c) that $J$ is a lattice homomorphism from $X'' = X$ to $\linSpan(Z_+)$ and that $JX$ is a lattice ideal in $\linSpan(Z_+)$.
\end{remark}

A Banach lattice $X$ is said to have \emph{order continuous norm}, if every increasing and order bounded net (equivalently, sequence) in $X_+$, is norm convergent. 
Classical examples are $L^p$-spaces for $p \in [1,\infty)$ as well as the space $c_0$.
KB-spaces have order continuous norm, but the converse is not true as the example $c_0$ shows.  We refer to \cite{Wnuk1999} for a thorough and extensive presentation of the theory of Banach lattices with order continuous norms.

\begin{theorem}
    \label{thm:span-lattice-oc} 
    Let $X$ be a Banach lattice with order continuous norm and let $Z$ be an ordered Banach space. 
    Assume that there exists a positive bijection $S \in \calL(Z,X)$, a positive operator $J \in \calL(X,Z)$ and a sequence of positive operators $(R_n) \subseteq \calL(Z,X)$ such that $(JR_n) \subseteq \calL(Z)$ converges to $\id_Z$ with respect to the strong operator topology. 
    \begin{enumerate}[\upshape (a)]
        \item 
        The space $\linSpan(Z_+)$ is a Banach lattice with order continuous norm with respect to a norm that is equivalent to $\norm{\argument}_{\linSpan(Z_+)}$. 

        \item 
        If $X$ is a KB-space, then so is $\linSpan(Z_+)$.
        
        \item 
        If the sequence $(R_nJ) \subseteq \calL(X)$ converges to $\id_X$ with respect to the weak operator topology, then $J$ is a lattice homomorphism from $X$ to $\linSpan(Z_+)$ and $JX$ is a lattice ideal in $\linSpan(Z_+)$.
    \end{enumerate}
\end{theorem}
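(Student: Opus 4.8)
For part~(a), the plan is to imitate the proof of Theorem~\ref{thm:span-lattice-dual}(a), Step~1, but inside $Z$ itself rather than in a dual space: the role played there by the weak${}^*$-compactness from Banach--Alaoglu will here be played by the order continuity of the norm of $X$, transported to $Z$ via the positive bijection $S$. As a preliminary I would record two consequences of the existence of $S$: by the open mapping theorem $S^{-1}$ is bounded, and since the cone of the Banach lattice $X$ is normal, pushing this through the isomorphism $S$ shows that $Z_+$ is normal as well. Now I fix $z \in \linSpan(Z_+)$ and choose $y \in Z_+$ with $-y \le z \le y$. Since $X$ is a vector lattice I may form $m_n := J\modulus{R_n z} \in Z_+$, and the positivity of $R_n$ and $J$ together with $-y \le z \le y$ gives $\pm J R_n z \le m_n \le J R_n y$ for every $n$. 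Applying $S$ yields $0 \le S m_n \le S J R_n y$ in $X$, where $(S J R_n y)$ converges to $Sy$ (because $J R_n y \to y$ and $S$ is bounded) and is therefore relatively weakly compact. Here I would invoke that in a Banach lattice with order continuous norm the solid hull of a relatively weakly compact set is again relatively weakly compact \cite{Meyer-Nieberg1991}, so that $(S m_n)$, and hence $(m_n) = (S^{-1} S m_n)$, is relatively weakly compact; passing to a weakly convergent subnet $m_{n_j} \to s$ and using that $Z_+$ is weakly closed gives $s \in Z_+$.

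It then remains to verify that $s = \sup\{z,-z\}$ in $\linSpan(Z_+)$. Letting $j \to \infty$ in $m_{n_j} \mp J R_{n_j} z \ge 0$ and using $J R_n z \to z$ together with the weak closedness of $Z_+$ gives $\pm z \le s$; and for any upper bound $u \in \linSpan(Z_+)$ of $\pm z$ one has $u \mp z \ge 0$, hence $R_n u \ge \modulus{R_n z}$ in $X$ and $J R_n u \ge m_n$, so passing to the limit along the subnet yields $s \le u$. Thus $\linSpan(Z_+)$ is a vector lattice, and being an ordered Banach space with normal cone it becomes, by Lemma~\ref{lem:lattice-with-normal-cone}, a Banach lattice under an equivalent norm. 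For the order continuity I would take an increasing net in $Z_+$ that is order bounded in $\linSpan(Z_+)$, apply the positive operator $S$ to obtain an increasing order-bounded net in $X$, which converges in $X$ because the norm of $X$ is order continuous, pull the limit back by the bounded operator $S^{-1}$ to obtain convergence in $Z$, and finally upgrade this to convergence in $\norm{\argument}_{\linSpan(Z_+)}$ by Lemma~\ref{lem:convergence-in-span}.

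Part~(b) follows the same scheme: a norm-bounded increasing net in $Z_+$ is, on the cone, also bounded for $\norm{\argument}_{\linSpan(Z_+)}$, so applying $S$ produces a norm-bounded increasing net in $X$, which converges because $X$ is a KB-space, and Lemma~\ref{lem:convergence-in-span} again transfers the convergence to $\linSpan(Z_+)$. For part~(c), by Lemma~\ref{lem:face-vector-lattice} it suffices to show that $J$ is bipositive and that $J X_+$ is a face in the positive cone $Z_+$ of $\linSpan(Z_+)$. Bipositivity is immediate: if $Jx \ge 0$ then $R_n J x \ge 0$ converges weakly to $x$, and $X_+$ is weakly closed. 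For the face property I would, given $0 \le z \le Jx$ with $x \in X_+$ and $z \in Z_+$, consider $(R_n z) \subseteq X_+$, observe $0 \le R_n z \le R_n J x$ where $(R_n J x)$ is relatively weakly compact (since $R_n J x \to x$ weakly by the assumption of~(c)), again invoke the solid-hull result in $X$ to extract a weak subnet $R_{n_j} z \to x' \ge 0$, and conclude $J x' = z$ from $J R_n z \to z$ and the weak continuity of $J$; thus $z \in J X_+$.

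The main obstacle, compared with the reflexive situation of Corollary~\ref{cor:span-reflexive}, is precisely the extraction of these weak subnet limits: since $Z$ need not be reflexive, norm-boundedness alone does not yield weak compactness. This is where the order continuity of the norm of $X$ is indispensable, through the fact that solid hulls of relatively weakly compact sets remain relatively weakly compact, and where the positive bijection $S$ earns its place, being used both to render $Z_+$ normal and to carry this order-continuity compactness from $X$ back to $Z$. The step that I expect to require the most care is checking that the extracted limit $s$ is genuinely the \emph{least} upper bound of $\pm z$ rather than merely an upper bound, as this is what distinguishes the construction of a supremum from the mere existence of a majorant.
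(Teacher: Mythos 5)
Your proposal reproduces the paper's architecture almost exactly (the supremum construction from $0 \le J\modulus{R_n z} \le JR_n y$, normality of $Z_+$ via $S$, then Lemma~\ref{lem:lattice-with-normal-cone} and Lemma~\ref{lem:convergence-in-span}, and Lemma~\ref{lem:face-vector-lattice} for part~(c)), but the compactness step on which everything hinges is supported by a statement that is false. There is no theorem in \cite{Meyer-Nieberg1991} asserting that in a Banach lattice with order continuous norm the solid hull of a relatively weakly compact set is again relatively weakly compact, and no such theorem can exist. Counterexample: let $M$ be the Marcinkiewicz space on $[0,1]$ with norm $\norm{f}_M := \sup_{0 < t \le 1} t^{-1/2} \int_0^t f^*(s) \dx s$ (where $f^*$ denotes the decreasing rearrangement), and let $E$ be its order continuous part, i.e.\ the closed ideal of those $f$ with $t^{-1/2}\int_0^t f^*(s)\dx s \to 0$ as $t \to 0$; this $E$ is a Banach lattice with order continuous norm. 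Put $u(s) := s^{-1/2}$, so $u \in M \setminus E$, set $u_n := u \wedge n \in E$ and $x_n := r_n u_n$, where $r_n$ is the $n$-th Rademacher function; then $\norm{x_n}_E = \norm{u_n}_E \le 2$. Since $E$ has order continuous norm, every functional on $E$ is given by a function $g$ in the K\"othe dual of $E$, which coincides with the K\"othe dual of $M$, so $ug \in L^1$; writing $\int x_n g = \int r_n (u_n - u) g + \int r_n u g$, the first term tends to $0$ by dominated convergence and the second because $(r_n)$ is weak${}^*$-null in $L^\infty$. Hence $(x_n)$ is weakly null and $W := \{x_n : n \in \bbN\} \cup \{0\}$ is weakly compact. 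But the solid hull of $W$ contains $\modulus{x_n} = u_n$, and $(u_n)$ has no weakly convergent subsequence in $E$: since $u_n \to u$ in $L^1$, any weak limit $v \in E$ of a subsequence would satisfy $\int v g = \int u g$ for all $g \in L^\infty$, forcing $v = u \notin E$. By Eberlein--\v{S}mulian, the solid hull of $W$ is not relatively weakly compact. (That order continuity does not suffice here is known; the precise conditions under which solid hulls inherit relative weak compactness were studied by Chen and Wickstead.)

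What saves your part~(a) is that there the dominating sequence $(SJR_n y)$ converges in \emph{norm}, and for norm-convergent dominating sequences the desired conclusion is true --- but proving it requires exactly the device your write-up is missing, which is the paper's Step~1: pass to a subsequence with $\sum_n \norm{SJR_n y - Sy}_X < \infty$, so that $h := \sum_n \modulus{SJR_n y - Sy}$ exists in $X$ and $0 \le SJ\modulus{R_n z} \le SJR_n y \le h + Sy$ for all $n$; then the whole sequence lies in the single order interval $[0, h + Sy]$, which is weakly compact precisely because $X$ has order continuous norm \cite[Theorem~1.12]{Wnuk1999}. With this substitution, your parts~(a) and~(b) go through as written. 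Part~(c) is more seriously affected: there your dominating sequence $(R_n J x)$ converges only \emph{weakly}, so even the corrected (norm-convergence) lemma does not apply, and no summability trick is available. The paper handles this step by a genuinely different mechanism, via \cite[Theorem~2.5]{AroraGlueckPaunonenSchwenningerPreprint}: order continuity of the norm makes $X$ an ideal in $X''$ \cite[Theorem~II.5.10]{Schaefer1974}; one extracts a weak${}^*$-convergent subnet of the norm-bounded sequence $(R_n z)$ in $X''$, notes that its limit $\xi$ satisfies $0 \le \xi \le x$ and hence lies in $X$ by the ideal property, and then obtains $J\xi = z$ from $JR_n z \to z$. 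You should replace the subnet extraction in your part~(c) by this bidual argument; as it stands, that step rests on the false solid-hull principle in a situation where not even its correct weakening is applicable.
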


Note that we do not assume $S$ to be bipositive -- i.e., the inverse operator $S^{-1} \in \calL(X,Z)$ need not be positive. 
This will be important for our application to extrapolation spaces in Section~\ref{section:extrapolation}.

We point out that Theorem~\ref{thm:span-lattice-oc} gives that $\linSpan(Z_+)$ is a Banach lattice if $X$ is a Banach lattice with order continuous norm, while Corollary~\ref{cor:span-reflexive} requires $X$ to be reflexive to get the same conclusion. 
Similarly, Theorem~\ref{thm:span-lattice-oc} gives that $\linSpan(Z_+)$ is a KB-space  if $X$ is so, while Corollary~\ref{cor:span-reflexive} again requires $X$ to be reflexive to get the same conclusion.
The price that one pays for this weaker assumption in Theorem~\ref{thm:span-lattice-oc} is that one also needs the existence of the bijective positive operator $S$ and that the sequence $(JR_n)$ is required to converge with respect to the strong instead of the weak operator topology.

\begin{proof}[Proof of Theorem~\ref{thm:span-lattice-oc}]
    (a) 
    We prove~(a) in three steps. 
    
    \emph{Step 1: We first make the following auxiliary observation: }
    If $(f_n)$ and $(g_n)$ are two sequences in $Z$ such that 
    $0 \le f_n \le g_n$ for all indices $n$ and if $(g_n)$ converges with respect to the norm in $Z$ to a vector $g \in Z$, then $(f_n)$ has a subsequence that converges weakly in $Z$. 
    
    Indeed, after replacing sequences with subsequences, we may assume that one has $\sum_{n=1}^\infty \norm{g_n - g}_Z < \infty$.
    Then the values $\norm{Sg_n-Sg}_X$ are also summable, so the series $h := \sum_{n=1}^\infty \modulus{Sg_n - Sg}$ converges absolutely in $X$. 
    One gets that $0 \le Sf_n \le Sg_n \le h+Sg$ for all indices $n$, where the first two inequalities follow from the positivity of $S$ and the last inequality from the definition of $h$.
    So the sequence $(Sf_n)$ is contained in an order interval of $X$ and since $X$ has order continuous norm, all order intervals in $X$ are weakly compact \cite[Theorem~1.12]{Wnuk1999}.
    Thus $(Sf_n)$ has a subsequence that converges weakly in $X$. 
    Since the inverse operator $S^{-1}:X\to Z$ (which might not be positive) is continuous by the bounded inverse theorem and is thus weakly continuous from $X$ to $Z$, it follows that $(f_n)$ does indeed have a subsequence that converges weakly in $Z$.

    \emph{Step 2: The space $\linSpan(Z_+)$ is a lattice.}
    To see this, let $z \in \linSpan(Z_+)$. 
    It suffices to show that $\pm z$ has a supremum in $\linSpan(Z_+)$. As $z \in  Z_+- Z_+$, there exists $y \in Z_+$ such that $\pm z \le y$.
    Then we have $\pm R_n z \le R_n y$ and hence $0 \le J\modulus{R_n z} \le J R_n y$ for all indices $n$.
    By assumption, $(JR_n y)$ converges to $y$ with respect to the norm in $Z$, and thus Step~1 shows that $(J \modulus{R_n z})$ has a subsequence $(J \modulus{R_{n_j} z})$ that converges weakly to an element $s \in Z$. 
    As $Z_+$ is closed and convex it is weakly closed, so one has $s \in Z_+$. 
    
    We show that $s$ is the supremum of $\pm z$ within $\linSpan(Z_+)$. 
    On the one hand, it follows from $\pm J R_{n_j} z \le J \modulus{R_{n_j} z}$ for all $j$, from the convergence of $(J R_n)$ to $\id_Z$ in the strong operator topology, and from the weak closedness of $Z_+$,  that $\pm z \le s$.
    On the other hand, for every upper bound $u \in \linSpan(Z_+)$ of $\pm z$ -- repeating the above argument with $u$ instead of $y$ -- we get
    \[
         J\modulus{R_{n_j} z} \le J R_{n_j} u
    \]
    for all indices $j$. 
    Taking weak limits in $Z$ and using again that the cone $Z_+$ is weakly closed in $Z$ we get 
    $ s \le u$. 
    Hence, $s$ is indeed the least upper bound of $\pm z$ within $\linSpan(Z_+)$.

    \emph{Step 3: The space $\linSpan(Z_+)$ is a Banach lattice with order continuous norm.}
    We first show that the cone $Z_+$ in $Z$ is normal. 
    To see this, let $y,z \in Z$ satisfy $0 \le y \le z$. 
    Then we have $0 \le Sy \le Sz$ in $X$ and hence, as $X$ is a Banach lattice, $\norm{Sy} \le \norm{Sz}$. 
    The inverse operator $S^{-1}$ is bounded by the bounded inverse theorem and thus, 
    \[
        \norm{y} \le \norm{S^{-1}} \norm{Sy} \le \norm{S^{-1}} \norm{Sz} \le \norm{S^{-1}} \norm{S} \norm{z}.
    \]
    Therefore, $Z_+$ is normal in $Z$, and in turn, also in $\linSpan(Z_+)$; 
    see the discussion after~\eqref{eq:norm-on-span}.
    So $\linSpan(Z_+)$ is an ordered Banach space with a normal cone and, at the same time, a vector lattice. 
    According to Lemma~\ref{lem:lattice-with-normal-cone} it is thus a Banach lattice with respect to an equivalent norm.

    In order to show that the norm in $\linSpan(Z_+)$ is order continuous, let $(z_j)$ be an increasing order bounded net in $Z_+$; we need to show that it is norm convergent in $\linSpan(Z_+)$.
    The net $(S z_j)$ in $X_+$ is also increasing and order bounded, so it is norm convergent in $X$ since $X$ has order continuous norm. 
    By the bounded inverse theorem, we obtain that the net $(z_j)$ is norm convergent with respect to the norm in $Z$. As $(z_j)$ was increasing, we may apply Lemma~\ref{lem:convergence-in-span} to assert the claim.

    (b)
    This is essentially the same argument as in the previous paragraph -- one just argues with norm-bounded instead of order-bounded nets now.
    
    (c) 
    As the Banach lattice $X$ has order continuous norm, it is an ideal in its bidual $X''$ \cite[Theorem~II.5.10]{Schaefer1974} and thus, $X_+$ is a face in $X''_+$. 
    Moreover, it follows from the convergence $J R_n \to \id_Z$ in the strong operator topology that $JX$ is dense in $Z$. 
    Due to those properties and due to the convergence $R_n J \to \id_X$ in the weak operator topology,  \cite[Theorem~2.5]{AroraGlueckPaunonenSchwenninger2024} is applicable and gives that $JX_+$ is a face in $Z_+$ and thus in the cone of $\linSpan(Z_+)$ since the latter coincides with $Z_+$.

    The convergence $R_n J \to \id_X$ in the weak operator topology also implies that $J$ is bipositive. 
    Hence, Lemma~\ref{lem:face-vector-lattice} shows that $J$ is a lattice homomorphism from $X$ to $\linSpan(Z_+)$ and that $JX$ is a lattice ideal in $\linSpan(Z_+)$.
\end{proof}

\section{Sobolev spaces of negative order}
\label{section:sobolev}

Let $p,q \in (1,\infty)$ be Hölder conjugates and let $d,k \ge 1$ be integers. 
We first consider Sobolev spaces on the whole space $\bbR^d$ (Theorem~\ref{exa:negative-sobolev-spaces}). 
Later on, we discuss bounded domains with continuous boundaries (Theorem~\ref{thm:negative-sobolev-spaces-domain}).
We endow the Sobolev space $W^{k,p}(\bbR^d)$ with the usual order  inherited from $L^p(\bbR^d)$. 
This is an ordered Banach space with generating cone (even for $p \in [1,\infty]$, although the result is much more difficult to show for $p=1$; see the discussion in the introduction of \cite{PonceSpector2020}). 
The cone is not normal as can easily be seen by considering rapidly oscillating functions. 
Furthermore,  $W^{k,p}(\bbR^d)$ is a vector lattice if and only if $k = 1$, see, for instance, \cite[Lemma~7.6]{GilbargTrudinger2001} and \cite[Example~(d) on Page~419]{ArendtNittka2009}; however, even for the case $k=1$ the space is not a Banach lattice due to the cone not being normal. 

Let us now use Theorem~\ref{thm:span-lattice-dual} to demonstrate that the situation is different for Sobolev spaces of negative order. 
The space $W^{-k,p}(\bbR^d)$ is the dual space of $W^{k,q}(\bbR^d)$ and it thus contains $L^p(\bbR^d)$ in a canonical way. 
We endow $W^{-k,p}(\bbR^d)$ with the wedge $W^{-k,p}(\bbR^d)_+$ that is defined to be the dual wedge of the cone of $W^{k,q}(\bbR^d)$; this dual wedge is actually a cone since $W^{k,q}(\bbR^d)_+$ is generating. 
Moreover, the dual cone $W^{-k,p}(\bbR^d)_+$ coincides with the closure of $L^p(\bbR^d)_+$ in $W^{-k,p}(\bbR^d)$; 
this follows from from reflexivity of $W^{-k,p}(\bbR^d)$ and from the bipositivity of the embedding of $W^{k,q}(\bbR^d)$ into $L^q(\bbR^d)$. 
As noted above, the cone $W^{k,q}(\bbR^d)_+$ is not normal. 
Hence, by duality the cone $W^{-k,p}(\bbR^d)_+$ is not generating in $W^{-k,p}(\bbR^d)$ \cite[Theorem~2.40]{AliprantisTourky2007}. 
So in particular $W^{-k,p}(\bbR^d)$ cannot be a vector lattice. 
Nevertheless, we show that the span of the cone in this space is still a vector lattice -- and even a Banach lattice when endowed with an appropriate norm. 

\begin{theorem}
    \label{exa:negative-sobolev-spaces}
    Let $p \in (1,\infty)$ and let $d,k \ge 1$ be integers.
    The span of $W^{-k,p}(\bbR^d)_+$ in $W^{-k,p}(\bbR^d)$ is a KB-space (in particular, a Banach lattice) with respect to a norm equivalent to $\norm{\argument}_{\linSpan(W^{-k,p}(\bbR^d)_+)}$. 
    Moreover, $L^p(\bbR^d)$ is a lattice ideal in $\linSpan(W^{-k,p}(\bbR^d)_+)$ and the canonical embedding is a lattice homomorphism.
\end{theorem}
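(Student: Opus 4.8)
The plan is to deduce the theorem from Theorem~\ref{thm:span-lattice-dual} by choosing $X := W^{k,q}(\bbR^d)$ and $Z := L^q(\bbR^d)$, where $q \in (1,\infty)$ is the Hölder conjugate of $p$. The space $X$ is reflexive and, as recalled above, has generating cone; $Z$ is a Banach lattice; and the duality $X' = W^{-k,p}(\bbR^d)$ identifies the dual wedge $X'_+$ with the cone $W^{-k,p}(\bbR^d)_+$. The remaining task is to produce the operators $J$ and $(R_n)$ required by the theorem, after which parts~(a)--(c) apply essentially verbatim.

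For $J$ I would take the canonical embedding $W^{k,q}(\bbR^d) \hookrightarrow L^q(\bbR^d)$, which is positive because the order on $W^{k,q}(\bbR^d)$ is the one inherited from $L^q(\bbR^d)$; thus $J \in \calL(X,Z)_+$. For the approximating operators I would use mollification: fix $\rho \in C_c^\infty(\bbR^d)$ with $\rho \ge 0$ and $\int_{\bbR^d}\rho = 1$, set $\rho_n := n^d \rho(n\argument)$, and define $R_n f := \rho_n * f$. Convolution with a nonnegative function is a positive operator, while the identity $D^\alpha(\rho_n * f) = (D^\alpha \rho_n) * f$ together with Young's inequality shows that $R_n$ maps $L^q(\bbR^d)$ boundedly into $W^{k,q}(\bbR^d)$; hence $R_n \in \calL(Z,X)_+$.

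Next I would check the two convergence requirements, both of which reduce to the standard fact that $(\rho_n)$ is an approximate identity on $L^q(\bbR^d)$. Since mollification commutes with differentiation, for $x \in W^{k,q}(\bbR^d)$ and every multi-index $\alpha$ with $\modulus{\alpha} \le k$ one has $D^\alpha(R_n J x) = \rho_n * D^\alpha x \to D^\alpha x$ in $L^q(\bbR^d)$, so that $R_n J x \to x$ in the $W^{k,q}$-norm; in particular $(R_n J)$ converges to $\id_X$ in the weak operator topology. The same approximate-identity argument at the level of $L^q(\bbR^d)$ gives $J R_n f = \rho_n * f \to f$, so $(J R_n)$ converges to $\id_Z$ in the weak operator topology as well.

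With these ingredients Theorem~\ref{thm:span-lattice-dual}(a), combined with the reflexivity of $X$ through part~(b), shows that $\linSpan(W^{-k,p}(\bbR^d)_+) = \linSpan(X'_+)$ is a KB-space under a norm equivalent to $\norm{\argument}_{\linSpan(W^{-k,p}(\bbR^d)_+)}$, and part~(c) shows that $J'$ is a lattice homomorphism into this span with $J'Z'$ a lattice ideal. To finish I would identify $J' \colon L^p(\bbR^d) = Z' \to W^{-k,p}(\bbR^d) = X'$ with the canonical embedding, which follows from $\duality{J'g}{x} = \duality{g}{Jx}$, i.e.\ $J'g$ is integration against the $L^p$-function $g$; this turns the abstract conclusion of part~(c) into the assertion that $L^p(\bbR^d)$ is a lattice ideal in $\linSpan(W^{-k,p}(\bbR^d)_+)$ and that the canonical embedding is a lattice homomorphism. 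I expect the only genuinely non-routine step to be the construction of the \emph{positive} operators $R_n$ with range in $W^{k,q}(\bbR^d)$ and the verification of convergence in the $W^{k,q}$-norm; everything else is bookkeeping and a direct appeal to the abstract theorem.
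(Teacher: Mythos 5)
Your proposal is correct and follows essentially the same route as the paper: the same choice $X = W^{k,q}(\bbR^d)$, $Z = L^q(\bbR^d)$, $J$ the canonical embedding, $R_n$ given by mollification, and then a direct appeal to Theorem~\ref{thm:span-lattice-dual}(a)--(c), with the paper simply citing standard references for the approximate-identity convergences that you verify by hand. (Minor point in your favour: your normalisation $\rho_n := n^d\rho(n\argument)$ is the correct one in dimension $d$; the paper's $n\rho(n\argument)$ is a typo.)
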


\begin{proof}
    Let $q \in (1,\infty)$ be the Hölder conjugate of $p$ and
    let $J: X := W^{k,q} (\bbR^d) \to L^{q}(\bbR^d) =: Z$ be the canonical embedding. 
    The cone in $X$ is generating (see the introduction of \cite{PonceSpector2020}).
    Let $\rho: \bbR^d \to [0,\infty)$ be a test function with integral $1$ and define the mollifier sequence $(\rho_n)$ by setting $\rho_n := n^d \rho(n \argument)$ for each integer $n \ge 1$. 
    For each $n$, let $R_n: Z \to X$ be given by $R_n f = \rho_n \star f$ for every $f \in Z$. 
    Then $\big(R_n J\big)$ converges strongly to the identity on $X$ \cite[Proof of Theorem~2.1.2]{Kesavan1989} and $\big(J R_n\big)$ converges strongly to the identity on $Z$ \cite[Corollary~1.5.2]{Kesavan1989}.
    Hence, the claim follows by an application of Theorem~\ref{thm:span-lattice-dual} since $W^{-k,p}(\bbR^d) = X'$.
\end{proof}

Let us now consider the case of domains -- or, more generally, open subsets -- in $\bbR^d$. 
Let $p \in (1,\infty)$, let $d,k \ge 1$ be integers, and let $\Omega \subseteq \bbR^d$ be a non-empty and open. 
Recall that $W^{k,q}_0(\Omega)$ is the closure of all test functions on $\Omega$ within the Sobolev space $W^{k,q}(\Omega)$; here $q$ denotes again the Hölder conjugate of $p$.
The space $W^{k,q}_0(\Omega)$ is an ordered Banach space with respect to the order inherited from $L^q(\Omega)$. 
The span of the cone in $W^{k,q}_0(\Omega)$ contains all test functions on $\Omega$ (since each such test function is the difference of two positive test functions on $\Omega$) and hence, the span of the cone is norm dense in $W^{k,q}_0(\Omega)$. 
It is, however, not clear under which conditions the cone in this space is even generating -- see Proposition~\ref{prop:generating-cone-1D}, the discussion that precedes this proposition, and Open Problem~\ref{op:generating}.

The dual space of $W^{k,q}_0(\Omega)$ is denoted by $W^{-k,p}(\Omega)$. 
As the span of the cone of $W^{k,q}_0(\Omega)$ is norm dense, the dual space $W^{-k,p}(\Omega)$ is also an ordered Banach space with respect to the dual cone. 
The dual cone coincides with the closure of $L^p(\Omega)_+$ in $W^{-k,p}(\Omega)$ under the canonical embedding.

Our goal is to show that the span of the cone in $W^{-k,p}(\Omega)$ is a lattice if the boundary of $\Omega$ is sufficiently nice. 
To be precise, let us recall the following notions. 
A map $\bbR^d \to \bbR^d$ is called a \emph{rigid motion} if it is the composition of a rotation and a translation. 
The set $\Omega$ is said to have \emph{continuous boundary} if for every $\bm x_0 \in \partial \Omega$, there exists a radius $r > 0$, a rigid motion $M: \bbR^d \to \bbR^d$, and a continuous function $F: \bbR^{d-1} \to \bbR$ such that $M(\bm x_0) = \bm 0$ and 
\begin{align*}
    M \big(\Omega \cap \openBall{r}{\bm x_0}\big) 
    = 
    \big\{\bm  x \in \openBall{r}{\bm 0} : \, F(x_1, \dots, x_{d-1}) < x_d \big\}
    ,
\end{align*}
where $\openBall{r}{\bm c}$ denotes the open Euclidean ball in $\bbR^d$ with radius $r$ and center $\bm c$;
see for instance \cite[Definition~9.57]{Leoni2017}. 
Observe that $F(\bm 0) = 0$: 
On one hand, since $\bm x_0$ is not an element of $\Omega \cap \openBall{r}{\bm x_0}$ and in turn, $\bm 0 = M\bm (\bm x_0)$ is not an element of $M \big(\Omega \cap \openBall{r}{\bm x_0}\big)$, one has $F(\bm 0) \ge 0$.
On the other hand, as $\bm x_0$ is a boundary point of $\Omega$, there exists a sequence in $\Omega \cap \openBall{r}{\bm x_0}$ that converges to $\bm x_0$; 
by applying $M$ to this sequence we get a sequence in the set $M\big(\Omega \cap \openBall{r}{\bm x_0}\big)$ that converges to $\bm 0$, so $F(\bm 0) \le 0$.

Note that the definition of continuous boundary also makes sense in the case $d=1$ (if one uses the convention $\bbR^0 := \{0\})$. 
It is easy to check that a non-empty bounded open set $\Omega \subseteq \bbR$ has continuous boundary if and only if it is the disjoint union of finitely many open intervals that have a non-zero distance to each other.

\begin{theorem}
    \label{thm:negative-sobolev-spaces-domain}
    Let $p,q \in (1,\infty)$ be Hölder conjugates, let $d,k \ge 1$ be integers, and let $\Omega \subseteq \bbR^d$ be a non-empty bounded open set with continuous boundary. 
    Assume that the cone in $W^{k,q}_0(\Omega)$ is generating.
    
    Then the span of $W^{-k,p}(\Omega)_+$ in $W^{-k,p}(\Omega)$ is a KB-space (in particular, a Banach lattice) with respect to a norm equivalent to $\norm{\argument}_{\linSpan(W^{-k,p}(\Omega)_+)}$. 
    Moreover, $L^p(\Omega)$ is a lattice ideal in $\linSpan(W^{-k,p}(\Omega)_+)$ and the canonical embedding is a lattice homomorphism.
\end{theorem}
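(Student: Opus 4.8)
The plan is to mirror the proof of Theorem~\ref{exa:negative-sobolev-spaces} and reduce the assertion to an application of Theorem~\ref{thm:span-lattice-dual}. Set $X := W^{k,q}_0(\Omega)$ and $Z := L^q(\Omega)$, and let $J \colon X \to Z$ be the canonical embedding, which is positive. Since $q \in (1,\infty)$, the space $W^{k,q}(\Omega)$ is reflexive, and hence so is its closed subspace $X$. By hypothesis the cone of $X$ is generating, while $Z$ is a Banach lattice; moreover $W^{-k,p}(\Omega) = X'$ and $L^p(\Omega) = Z'$, and $J'$ is precisely the canonical embedding of $L^p(\Omega)$ into $W^{-k,p}(\Omega)$. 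Thus it remains to construct a sequence of positive operators $(R_n) \subseteq \calL(Z,X)$ such that $(R_nJ)$ converges to $\id_X$ and $(JR_n)$ converges to $\id_Z$, at least in the weak operator topology; then Theorem~\ref{thm:span-lattice-dual}(a) yields that $\linSpan(W^{-k,p}(\Omega)_+)$ is a Banach lattice under an equivalent norm, part~(b) upgrades this to a KB-space by reflexivity of $X$, and part~(c) gives that $J'$ is a lattice homomorphism with $L^p(\Omega) = J'Z'$ a lattice ideal in $\linSpan(W^{-k,p}(\Omega)_+)$.

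The crux is the construction of the $R_n$. In contrast to the whole-space case, plain mollification does not map $L^q(\Omega)$ into $W^{k,q}_0(\Omega)$, since the mollified function need neither be supported in $\Omega$ nor vanish at the boundary. To repair this I would first translate inward and only then mollify. The assumption that $\Omega$ has continuous boundary guarantees the \emph{segment condition}: each boundary point has a neighbourhood on which, in suitable rigid-motion coordinates, $\Omega$ is the region $\{x_d > F(x')\}$ above a continuous graph, so that translation in the direction $+e_d$ pushes $\overline\Omega$ locally into $\Omega$ (cf.\ \cite{Leoni2017}). Fix a finite cover of $\overline\Omega$ by an interior open set $U_0$ with $\overline{U_0} \subseteq \Omega$ together with boundary charts $U_1,\dots,U_N$ on each of which a single inward direction $v_i$ works, and choose a nonnegative smooth partition of unity $(\psi_i)_{i=0}^N$ subordinate to this cover. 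For $f \in L^q(\Omega)$ I would set $R_n f := \sum_{i=0}^N R_n^{(i)}(\psi_i f)$, where $R_n^{(0)}$ is plain mollification (legitimate since $\psi_0 f$ is compactly supported in $\Omega$) and, for $i \ge 1$, $R_n^{(i)}$ works in the coordinates given by the rigid motion $M_i$: it extends $\psi_i f$ by zero, translates by $t_n v_i$, mollifies with radius $s_n$, and transforms back. With the parameters $t_n, s_n \to 0$ chosen small enough (depending only on $n$ and the geometry, so that $R_n$ is a genuine fixed linear operator), each summand is smooth and compactly supported \emph{inside} $\Omega$, whence $R_n f \in C^\infty_c(\Omega) \subseteq W^{k,q}_0(\Omega)$. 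Positivity of $R_n$ is then automatic, as translation, mollification against a nonnegative kernel, multiplication by $\psi_i \ge 0$, and the coordinate changes are all positive operations.

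It remains to verify the two convergences. For $(JR_n) \to \id_Z$ I would argue directly in $L^q(\Omega)$: for fixed $f$, strong continuity of translation on $L^q$ together with convergence of mollification and $\sum_i \psi_i \equiv 1$ near $\overline\Omega$ give $R_n f \to f$ in $L^q(\Omega)$, i.e.\ even strong operator convergence. For $(R_nJ) \to \id_X$ I would first check that the operators $R_n$ are uniformly bounded on $W^{k,q}_0(\Omega)$: using that the zero-extension maps $W^{k,q}_0(\Omega)$ isometrically into $W^{k,q}(\bbR^d)$, that translation is an isometry and mollification a contraction on $W^{k,q}(\bbR^d)$, and that multiplication by each $\psi_i$ is bounded on $W^{k,q}_0(\Omega)$, one bounds $\norm{R_n}_{\calL(X)}$ independently of $n$. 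On the dense subspace $C^\infty_c(\Omega)$ of $X$ one has $R_n \varphi \to \varphi$ in $W^{k,q}(\Omega)$ by the standard translate-and-mollify convergence for smooth compactly supported functions, and a routine $3\varepsilon$-argument then extends this to all of $X$. This yields $R_nJ \to \id_X$ strongly, completing the hypotheses of Theorem~\ref{thm:span-lattice-dual}.

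The main obstacle is the geometric step in the second paragraph: turning the continuous-boundary hypothesis into a single \emph{fixed} positive operator $R_n \colon L^q(\Omega) \to W^{k,q}_0(\Omega)$. The delicate points are that the inward translation must move the compact support of each localized piece a \emph{uniform} positive distance into $\Omega$ (so that mollification with radius $s_n$ keeps the support compactly inside $\Omega$ for \emph{all} $f$ simultaneously), and that the resulting operators stay uniformly bounded on $W^{k,q}_0(\Omega)$; both rely on compactness of $\partial\Omega$ and on refining the cover so that one inward direction suffices per chart. Once these are in place, the reduction to Theorem~\ref{thm:span-lattice-dual} together with the reflexivity of $X$ delivers the KB-space conclusion and the lattice-ideal statement for $L^p(\Omega)$.
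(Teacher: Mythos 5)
Your proposal is correct and follows essentially the same route as the paper: reduce to Theorem~\ref{thm:span-lattice-dual} with $X = W^{k,q}_0(\Omega)$, $Z = L^q(\Omega)$, and build the positive operators $R_n$ by localizing with a partition of unity, pushing each piece a uniform distance into $\Omega$ (exploiting the continuous boundary), and then mollifying. The only difference is cosmetic: the paper implements the inward push by affine compressions toward a point above the local graph (Lemmas~\ref{lem:a-bit-of-space-pointwise} and~\ref{lem:a-bit-of-space}), whereas you use inward translations in the segment-condition style, and both yield the required fixed positive operators with $R_nJ \to \id_X$ and $JR_n \to \id_Z$ strongly.
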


We discuss the assumption that $W_0^{k,q}(\Omega)$ has a generating cone at the end of this section; see in particular the discussion after the proof of Theorem~\ref{thm:negative-sobolev-spaces-domain} as well as Proposition~\ref{prop:generating-cone-1D} and Open Problem~\ref{op:generating}. 

For the proof of Theorem~\ref{thm:negative-sobolev-spaces-domain}, we once again use Theorem~\ref{thm:span-lattice-dual}. 
To construct the operators $R_n: L^q(\Omega) \to W^{k,q}_0(\Omega)$ we can now, in contrast to the situation on the whole space, not simply convolute with a sequence of mollifiers since we need to take care of the zero boundary conditions. 
To this end, we first perturb all functions in $L^q(\Omega)$ a bit to push them away from the boundary; 
this is the content of the following two lemmas. 
The arguments are inspired by common approximation arguments in Sobolev spaces -- see for instance in the proof of \cite[Theorem~11.35]{Leoni2017} -- but we need to make some adaptations to ensure that the approximation process depends linearly on the function that is approximated.

Recall that a mapping $A \colon \bbR^d \to \bbR^d$ is called \emph{affine} if there exists a matrix $B \in \bbR^{d \times d}$ and a vector $\bm b \in \bbR^d$ such that $A(\bm x) = B\bm x +\bm  b$ for all $\bm x \in \bbR^d$. 

\begin{lemma}
    \label{lem:a-bit-of-space-pointwise} 
    Let $d \ge 1$ be an integer, let $\Omega \subseteq \bbR^d$ be a non-empty open set with continuous boundary, and let $\bm x_0 \in \overline{\Omega}$. 
    Then there exists a bounded open neighbourhood $V \subseteq \bbR^d$ of $\bm x_0$ and a sequence of bijective affine mappings $A_n \colon \bbR^d \to \bbR^d$, $\bm x \mapsto B_n \bm x +\bm  b_n$, with the following properties: 
    \begin{enumerate}[\upshape (a)]
        \item 
        One has $B_n \to \id$ and $\bm b_n \to 0$ as $n \to \infty$.

        \item 
        One has $\overline{A_n(\Omega \cap V)} \subseteq \Omega$ for every $n$.
    \end{enumerate}
\end{lemma}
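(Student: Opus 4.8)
The plan is to reduce the statement to the local picture provided by the continuous boundary condition: near each boundary point $\bm x_0$, after applying the rigid motion $M$, the set $\Omega$ is the region lying strictly above the graph of a continuous function $F$. The key geometric idea is that translating the set slightly \emph{downward} (in the $M$-transformed coordinates, i.e.\ in the direction of decreasing $x_d$) pushes the region into the interior of $\Omega$, because $F$ is continuous and hence uniformly continuous on compact sets. Concretely, I would first treat a boundary point $\bm x_0 \in \partial\Omega$ and then separately the easier case of an interior point $\bm x_0 \in \Omega$ (where one simply shrinks a small ball and the affine maps can even be taken to be identity-like contractions toward $\bm x_0$).

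For the boundary case, let $M(\bm x) = Q\bm x + \bm c$ be the rigid motion from the definition, with $Q$ orthogonal, so that $M(\Omega \cap \openBall{r}{\bm x_0})$ equals the epigraph-type set $\{\bm x \in \openBall{r}{\bm 0} : F(x_1,\dots,x_{d-1}) < x_d\}$. First I would define, in the transformed coordinates, the downward shift $\bm y \mapsto \bm y - t_n \bm e_d$ for a sequence $t_n \downarrow 0$, and then conjugate back by $M$ to obtain affine maps on the original coordinates. That is, I would set $A_n := M^{-1} \circ (\text{shift by } -t_n \bm e_d) \circ M$, which is affine with linear part $B_n = Q^{-1} Q = \id$ and translation part $\bm b_n = -t_n Q^{-1}\bm e_d \to \bm 0$; this already yields property~(a). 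The neighbourhood $V$ would be chosen as $M^{-1}$ of a slightly smaller ball, say $\openBall{r/2}{\bm 0}$, intersected appropriately so that $A_n(\Omega \cap V)$ stays inside $\openBall{r}{\bm x_0}$ for all large $n$.

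The main obstacle, and the heart of the proof, is verifying property~(b): that the closure of the shifted set sits inside $\Omega$. In transformed coordinates a point $(\bm x', x_d)$ of $\Omega \cap V$ satisfies $F(\bm x') < x_d$, and after the downward shift it becomes $(\bm x', x_d - t_n)$. To land strictly above the graph one needs $F(\bm x') < x_d - t_n$, which requires a uniform lower gap $x_d - F(\bm x') \ge t_n$; this fails for points very close to the graph. The fix is the standard trick of combining the shift with a mild dilation toward the interior, or equivalently to first shrink $V$ so that on $\overline{\Omega \cap V}$ one controls $x_d - F(\bm x')$ from below via uniform continuity of $F$, and to note that we only need the \emph{closure} of $A_n(\Omega \cap V)$ — a compact set — to lie in the open set $\Omega$, so a strict inequality on a compact set suffices. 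I would therefore argue by uniform continuity: given the compact set $K := M^{-1}(\overline{\openBall{r/2}{\bm 0}})$, the function $F$ is uniformly continuous on the relevant compact domain, and for $t_n$ small the shifted closure is contained in the open epigraph, hence in $\Omega$ after transforming back. The technical care needed is to simultaneously keep $A_n(\Omega\cap V) \subseteq \openBall{r}{\bm x_0}$ (so that the local description of $\Omega$ applies) and to push strictly above the graph; shrinking $V$ to $\openBall{r/2}{\bm 0}$ and restricting to $n$ large handles both, since a point of $\overline{\openBall{r/2}{\bm 0}}$ shifted by at most $t_n$ stays in $\openBall{r}{\bm 0}$ for $t_n < r/2$.
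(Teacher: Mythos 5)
There is a genuine gap, and it lies exactly in the step you identify as the heart of the proof. Locally, $\Omega$ is the region \emph{above} the graph: $M(\Omega \cap \openBall{r}{\bm x_0}) = \{\bm x \in \openBall{r}{\bm 0} : F(x_1,\dots,x_{d-1}) < x_d\}$. Hence the interior direction is that of \emph{increasing} $x_d$, and your shift by $-t_n \bm e_d$ moves points towards and across the graph, i.e.\ \emph{out} of $\Omega$, not into it. You notice the symptom (points close to the graph fail the inequality $F(\bm x') < x_d - t_n$), but the proposed repairs cannot work. Shrinking $V$ and invoking uniform continuity of $F$ cannot produce a uniform lower bound on the gap $x_d - F(\bm x')$: since $\bm x_0$ is a boundary point, \emph{every} neighbourhood $V$ of $\bm x_0$ meets $\Omega$ in points with $x_d - F(\bm x')$ arbitrarily small, so the infimum of the gap over $\Omega \cap V$ is $0$ no matter how small $V$ and $t_n$ are. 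The compactness remark fails for the same reason: the closure $\overline{\Omega \cap V}$ contains points lying \emph{on} the graph ($F(\bm x') = x_d$), and a downward shift sends these strictly below the graph, hence outside $\Omega$; so the strict inequality you need is simply false on that compact set, not merely unverified.

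The repair is immediate once the direction is reversed: shift \emph{upward}, $A_n := M^{-1} \circ (\argument + t_n \bm e_d) \circ M$ with $t_n \downarrow 0$, $t_n < r/2$, and $V := M^{-1}\big(\openBall{r/2}{\bm 0}\big)$. For any $\bm x \in \overline{\Omega \cap V}$ one has (in transformed coordinates) $F(\bm x') \le x_d$ and $\norm{\bm x} \le r/2$, so the shifted point satisfies $F(\bm x') \le x_d < x_d + t_n$ and lies in $\openBall{r}{\bm 0}$; thus it belongs to $\Omega$, and the closure point is handled \emph{because} the shift is strictly into the epigraph. Note that this corrected translation argument is somewhat simpler than the paper's own proof, which instead uses a compression in the $d$-th coordinate towards an interior point $\bm c$ placed above the graph (at height $r/4$) and a two-case analysis depending on whether $x_d \ge r/4$ or $x_d < r/4$; both arguments exploit the same mechanism, namely that every point of $\overline{\Omega \cap V}$, including graph points, is displaced strictly upward.
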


\begin{proof}
    We distinguish between two cases.

    \emph{First case:} $\bm x_0 \in \Omega$. 
    Choose $V := \openBall{r}{\bm x_0}$ for a number $r > 0$ that is sufficiently small to ensure that $V \subseteq \Omega$. 
    For each integer $n \ge 2$, let $A_n$ be the compression with factor $1-\frac{1}{n}$ and center $\bm x_0$, i.e.\ let $A_n(\bm x) := (1-\frac{1}{n}) (\bm x-\bm x_0)+\bm x_0$ for all $\bm x \in \bbR^d$. 
    In other words, we set $B_n := (1-\frac{1}{n})\id \in \bbR^{d \times d}$ and $b_n := \frac{1}{n}\bm x_0 \in \bbR^d$.
    Then~(a) and~(b) are clearly satisfied. 
    The bijectivity of $A_n$ holds since $n \ge 2$.

    \emph{Second case:} $\bm x_0 \in \partial \Omega$. 
    This is the interesting case. 
    Since $\Omega$ has a continuous boundary we may, after applying a rigid motion, assume that $\bm x_0 = 0$ and that there exists a number $r > 0$ and a continuous function $F: \bbR^{d-1} \to \bbR$ such that
    \begin{align*}
        \Omega \cap \openBall{r}{\bm 0} 
        = 
        \big\{ \bm x \in \openBall{r}{\bm 0} : \, F(x_1, \dots, x_{d-1}) < x_d \big\},
    \end{align*}
    and $F(\bm 0) = 0$. 
    By the continuity of $F$ we can find a number $\delta \in (0,\frac{r}{8})$ such that $F(\bm z) \le \frac{r}{8}$ for all $\bm z \in \bbR^{d-1}$ of norm $\norm{\bm z} \le \delta$. 
    Let $\bm c := (0,\dots,0,\frac{r}{4}) \in \Omega \cap \openBall{r}{0}$ and set $V := \openBall{\delta}{\bm 0} \times (-\frac{r}{4}, \frac{3r}{4}) \subseteq \bbR^d$, where the ball $\openBall{\delta}{\bm 0}$ in the Cartesian product is taken in $\bbR^{d-1}$.
    By using that $\delta < \frac{r}{8}$ one can check that $V \subseteq \overline{V} \subseteq \openBall{r}{\bm 0}$. 
    Moreover, both vectors $\bm x_0=0$ and $\bm c$ are located in $V$.

    Now, for each integer $n \ge 2$, let $B_n \in \bbR^{d \times d}$ be the diagonal matrix whose first $d-1$ diagonal entries are $1$ and whose last diagonal entry is $1-\frac{1}{n}$, and define
    \begin{align*}
        A_n(\bm x) 
        := 
        B_n(\bm x-\bm c)+\bm c 
        = 
        B_n \bm x + (\id-B_n\bm )\bm c 
        = 
        B_n \bm x + \frac{1}{n} \bm c
    \end{align*}
    for all $\bm x \in \bbR^d$ -- i.e.\ $A_n$ is a compression in the direction of the $d$-th axis with factor $1-\frac{1}{n}$ and center $\bm c$. 
    Since $n \ge 2$, the matrix $B_n$ is invertible and hence, $A_n$ is bijective.
    Note that applying the mapping $A_n$ to a vector only changes the $d$-th coordinate of the vector, but not the first $d-1$ coordinates.
    
    Let us show that the properties~(a) and~(b) are satisfied: 

    (a) 
    One clearly has $B_n \to \id$ and $\bm b_n := \frac{1}{n} \bm c \to \bm 0$ has $n \to \infty$.
    
    (b) 
    Fix an integer $n \ge 2$, let $\bm y \in \overline{A_n(\Omega \cap V)} = A_n(\overline{\Omega \cap V})$ and choose $\bm x \in \overline{\Omega \cap V}$ such that $A_n(\bm x) =\bm  y$. 
    We note that $(x_1, \dots, x_{d-1}) = (y_1, \dots, y_{d-1})$. 
    It is clear from geometric considerations that $V$ is invariant under $A_n$ and hence, the same is true for $\overline{V}$. 
    Thus, $\bm y = A_n(\bm x) \in \overline{V} \subseteq \openBall{r}{\bm 0}$. 
    Let us now show that even $\bm y \in \Omega \cap \openBall{r}{\bm 0}$, which will conclude the proof. 
    It suffices to prove that $F(y_1, \dots, y_{d-1}) < y_d$, and to this end we distinguish between the following two cases:

    If $x_d \ge \frac{r}{4}$, then it follows from the definition of $A_n$ that also $y_d \ge \frac{r}{4}$. 
    Since $\bm x \in \overline{V}$, one has $\norm{(x_1, \dots, x_{d-1})} \le \delta$, so the choice of $\delta$ implies that also $F(y_1, \dots, y_{d-1}) = F(x_1, \dots, x_{d-1}) \le \frac{r}{8} < \frac{r}{4} \le y_d$. 
    
    If $x_d < \frac{r}{4}$, then the definition of $A_n$ implies that $x_d < y_d$. 
    Since $\bm x \in \overline{\Omega \cap V} \subseteq \overline{\Omega \cap \openBall{r}{\bm 0}}$, we have $F(y_1, \dots, y_{d-1}) = F(x_1, \dots, x_{d-1}) \le x_d < y_d$.
\end{proof}

\begin{lemma}
    \label{lem:a-bit-of-space}
    Let $p \in [1,\infty)$, let $d,k \ge 1$ be integers, and let $\Omega \subseteq \bbR^d$ be a non-empty open bounded set with continuous boundary. 
    There exists a sequence of positive linear operators $(S_n)$ on $L^p(\Omega)$ and a sequence of compact subsets $K_n \subseteq \Omega$ with the following properties:
    \begin{enumerate}[\upshape(a)]
        \item 
        The operators $S_n$ converge strongly to the identity on $L^p(\Omega)$.

        \item 
        Each operator $S_n$ leaves $W^{k,p}_0(\Omega)$ invariant and the restrictions of the $S_n$ to this space converge strongly to the identity on $W^{k,p}_0(\Omega)$. 

        \item 
        For each $n$ and each $f \in L^p(\Omega)$, the function $S_n f$ vanishes outside of $K_n$.
    \end{enumerate}
\end{lemma}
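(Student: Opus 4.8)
The plan is to build the operators $S_n$ out of the \emph{local} affine deformations supplied by Lemma~\ref{lem:a-bit-of-space-pointwise}, glued together by a fixed smooth partition of unity, so that the entire construction depends linearly on $f$. Since $\Omega$ is bounded, $\overline{\Omega}$ is compact; applying Lemma~\ref{lem:a-bit-of-space-pointwise} at every point $\bm x_0 \in \overline{\Omega}$ produces an open cover of $\overline{\Omega}$, from which I extract a finite subcover $V_1, \dots, V_N$ with associated bijective affine maps $A_n^{(i)} \colon \bm x \mapsto B_n^{(i)} \bm x + \bm b_n^{(i)}$ satisfying $B_n^{(i)} \to \id$, $\bm b_n^{(i)} \to \bm 0$, and $\overline{A_n^{(i)}(\Omega \cap V_i)} \subseteq \Omega$. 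I then fix a smooth partition of unity $\psi_1, \dots, \psi_N$ with $\psi_i \in C_c^\infty(V_i)$, $\psi_i \ge 0$, and $\sum_{i=1}^N \psi_i \equiv 1$ on a neighbourhood of $\overline{\Omega}$.

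For $f \in L^p(\Omega)$, extended by zero to $\bbR^d$, I would set
\[
    S_n f := \sum_{i=1}^N (\psi_i f) \circ (A_n^{(i)})^{-1},
\]
the $i$-th summand being the pushforward of $\psi_i f$ along $A_n^{(i)}$, again extended by zero. Each summand is supported in $A_n^{(i)}(\operatorname{supp}(\psi_i f)) \subseteq A_n^{(i)}(\Omega \cap V_i)$, whose closure lies in $\Omega$ by Lemma~\ref{lem:a-bit-of-space-pointwise}(b); hence, setting $K_n := \bigcup_{i=1}^N \overline{A_n^{(i)}(\Omega \cap V_i)}$ — a finite union of compact subsets of $\Omega$, thus compact in $\Omega$ — property (c) holds. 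Positivity is immediate, since $\psi_i \ge 0$ and a pushforward along a bijection preserves the sign, so $f \ge 0$ forces $S_n f \ge 0$. Boundedness on $L^p(\Omega)$ follows from the change-of-variables identity $\norm{g \circ (A_n^{(i)})^{-1}}_{L^p} = \modulus{\det B_n^{(i)}}^{1/p}\, \norm{g}_{L^p}$ together with boundedness of multiplication by $\psi_i$.

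Both convergence statements reduce to the standard fact that, for $p \in [1,\infty)$, composition with affine bijections converging to $\id$ converges strongly to the identity: on $L^p(\bbR^d)$ by density of continuous compactly supported functions together with the uniform bound $\modulus{\det B_n^{(i)}}^{1/p} \to 1$, and on $W^{k,p}(\bbR^d)$ by additionally differentiating via the chain rule, so that each weak derivative of $g \circ (A_n^{(i)})^{-1}$ of order at most $k$ is a linear combination — with coefficients depending polynomially on the entries of $(B_n^{(i)})^{-1}$ — of the functions $(\partial^\beta g) \circ (A_n^{(i)})^{-1}$; as $B_n^{(i)} \to \id$ these coefficients converge and each $(\partial^\beta g)\circ (A_n^{(i)})^{-1} \to \partial^\beta g$ in $L^p$. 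Taking $g = \psi_i f$ and summing the finitely many terms yields $S_n f \to \sum_{i=1}^N \psi_i f = f$ in $L^p(\Omega)$, which is (a). For (b), if $f \in W^{k,p}_0(\Omega)$ then its zero extension lies in $W^{k,p}(\bbR^d)$, and so does $\psi_i f$; consequently $(\psi_i f)\circ (A_n^{(i)})^{-1} \in W^{k,p}(\bbR^d)$ is compactly supported inside $\Omega$ and therefore belongs to $W^{k,p}_0(\Omega)$ (a compactly supported $W^{k,p}$-function is approximable by test functions through mollification, since its support has positive distance to $\partial\Omega$). Thus $S_n$ leaves $W^{k,p}_0(\Omega)$ invariant, and the same affine-convergence fact, now in the $W^{k,p}$-norm, gives $S_n f \to f$ in $W^{k,p}_0(\Omega)$.

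The delicate point is less any single estimate than the bookkeeping that keeps the whole construction \emph{linear}: the affine maps $A_n^{(i)}$ and the cut-offs $\psi_i$ are selected once and for all, independently of $f$, so that $S_n$ is genuinely a bounded linear operator rather than a function-dependent approximation scheme — this is precisely the adaptation over the classical translate-and-mollify argument. The main technical obstacle is the Sobolev convergence in (b): one must control the chain-rule coefficients built from $(B_n^{(i)})^{-1}$ and confirm that the zero-extended, compactly supported images lie in $W^{k,p}_0(\Omega)$ rather than merely in $W^{k,p}(\Omega)$.
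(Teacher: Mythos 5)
Your proof is correct and follows essentially the same route as the paper's: the same finite subcover obtained by applying Lemma~\ref{lem:a-bit-of-space-pointwise} at every point of $\overline{\Omega}$, the same compact sets $K_n$, and the same operators built by composing a fixed smooth partition of unity with the inverse affine maps. The only immaterial difference is that you verify property~(c) and the invariance of $W^{k,p}_0(\Omega)$ directly via zero-extension arguments, whereas the paper first checks these properties on $C^\infty_c(\Omega)$ and then passes to $L^p(\Omega)$ and $W^{k,p}_0(\Omega)$ by density and continuity.
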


\begin{proof}
    For each $\bm y \in \overline{\Omega}$ there exists, according to Lemma~\ref{lem:a-bit-of-space-pointwise}, a bounded open neighbourhood $V_{\bm y} \subseteq \bbR^d$ of $\bm y$ and a sequence of affine bijections $A_{{\bm y},n} \colon \bbR^d \to \bbR^d$, $\bm x \mapsto B_{{\bm y},n}\bm x + b_{{\bm y},n}$ such that $B_{{\bm y},n} \to \id$ and $b_{{\bm y},n} \to 0$ as $n \to \infty$ and such that $\overline{A_{{\bm y},n}(\Omega \cap V_{\bm y})} \subseteq \Omega$ for every $n$. 
    Since the open sets $V_{\bm y}$ cover the compact set $\overline{\Omega}$, we can find a finite set of points $\emptyset \not= Y \subseteq \overline{\Omega}$ such that $\bigcup_{\bm y \in Y} V_{\bm y} \supseteq \overline{\Omega}$. 
    Define $K_n := \bigcup_{\bm y \in Y}  \overline{A_{{\bm y},n}(\Omega \cap V_{\bm y})}$ for every $n$. 
    Then each $K_n$ is a compact subset of $\Omega$. 

    For each $\bm y \in Y$ and each index $n$ we define a bounded linear operator $T_{{\bm y},n}: L^p(\bbR^d) \to L^p(\bbR^d)$ by $T_{{\bm y},n}f = f \circ A_{{\bm y},n}^{-1}$. 
    Then the operator $T_{{\bm y},n}$ restricts to a bounded linear operator on $W^{k,p}(\bbR^d)$ and, since $p < \infty$, one can check that for each $\bm y \in Y$ the operators $T_{{\bm y},n}$ converge to the identity operator strongly on $L^p(\bbR^d)$ and on $W^{k,p}(\bbR^d)$ as $n \to \infty$; 
    this is where one uses that $B_{{\bm y},n} \to \id$ and $b_{{\bm y},n} \to 0$.
    
    Now we use the existence of a smooth partition of unity \cite[Theorem~C.21 and Exercise~C.22]{Leoni2017}:
    there exist test functions $h_{\bm y} \in C^\infty_c(\bbR^d)$ for $\bm y \in Y$ that map into $[0,1]$ and have the following properties: the closed support of each $h_{\bm y}$ is contained in $V_{\bm y}$ and $\sum_{y \in Y} h_{\bm y}(\bm x) = 1$ for all $\bm x \in \overline{\Omega}$. 
    For each $n$, we now define the operator $S_n: L^p(\bbR^d) \to L^p(\bbR^d)$ by
    \begin{align*}
        S_n:  f \mapsto \sum_{\bm y \in Y} T_{{\bm y},n}(fh_{\bm y}).
    \end{align*}
    Then each operator $S_n$ leaves $W^{k,p}(\bbR^d)$ invariant and the sequence $(S_n)$ converges strongly -- both on $L^p(\bbR^d)$ and on $W^{k,p}(\bbR^d)$ -- to the operator 
    \begin{align*}
        S: f \mapsto f \sum_{\bm y \in Y} h_{\bm y}.
    \end{align*} 
    From now on we consider $C^\infty_c(\Omega)$ as a subspace of $C^\infty_c(\bbR^d)$ by extending each function with the value $0$ outside of $\Omega$. 
    Similarly, we consider $L^p(\Omega)$ as a subspace of $L^p(\bbR^d)$. 
    Then $S$ leaves $L^p(\Omega)$ invariant and acts as the identity operator on this space.
    
    We now observe that, for every $f \in C^\infty_c(\Omega)$ and each $n$, the support of $S_nf$ is contained in $K_n$. 
    To see this, fix such an $f$ and an index $n$ and let $\bm x \in \bbR^d$ such that $(S_nf)(\bm x) \not= 0$. 
    Then there exists a $\bm y \in Y$ such that $T_{{\bm y},n}(fh_{\bm y})(\bm x) \not= 0$ 
    and hence, $f\big(A_{{\bm y},n}^{-1}(\bm x)\big) \not= 0$ and $h_{\bm y}\big(A_{{\bm y},n}^{-1}(\bm x)\big) \not= 0$. 
    The first inequality implies that $A_{{\bm y},n}^{-1}(\bm x) \in \Omega$ and the second one implies that $A_{{\bm y},n}^{-1}(\bm x) \in V_{\bm y}$. 
    Thus, $\bm x \in A_{{\bm y},n}(\Omega \cap V_{\bm y}) \subseteq K_n$, as claimed. 

    So in particular, each $S_n$ leaves $C_c^\infty(\Omega)$ invariant. 
    Since each $S_n$ is continuous from $W^{k,p}(\bbR^d)$ to $W^{k,p}(\bbR^d)$ and $C_c^\infty(\Omega)$ is dense in $W^{k,p}_0(\Omega)$ with respect to the $W^{k,p}$-norm, it follows that each $S_n$ also leaves $W^{k,p}_0(\Omega)$ invariant. 
    Since $C^\infty_c(\Omega)$ is also dense in $L^p(\Omega)$, each of the operators $S_n$ is continuous with respect to the $L^p$-norm and the space of $L^p$-functions that vanish outside of $K_n$ is closed in $L^p$, it follows that $S_nf$ vanishes outside of $K_n$ for each $n$ and for each $f \in L^p(\Omega)$. 

    Finally, for each $f \in L^p(\Omega) \subseteq L^p(\bbR^d)$ we have $S_nf \to Sf = f$ with respect to the $L^p$-norm as $n \to \infty$.
    Similarly, one has $S_nf \to Sf = f$ with respect to the $W^{k,p}$-norm for all $f \in W^{k,p}_0(\Omega)$.
\end{proof}

The assumption that $\Omega$ be bounded in Lemma~\ref{lem:a-bit-of-space} (and hence in Theorem~\ref{thm:negative-sobolev-spaces-domain}) is needed since the first part of the proof uses the compactness of $\overline{\Omega}$. 
We do not know if a similar result holds for unbounded $\Omega$ (under, maybe, some kind of uniformity assumption on the continuous boundary).

\begin{proof}[Proof of Theorem~\ref{thm:negative-sobolev-spaces-domain}]
    We apply Theorem~\ref{thm:span-lattice-dual} to the spaces $X = W^{k,q}_0(\Omega)$ and $Z = L^q(\Omega)$, with $J: X \to Z$ the canonical embedding. 
    Then $J'$ is the canonical embedding of $Z' = L^p(\Omega)$ into $X' = W^{-k,p}(\Omega)$. 
    So it suffices to construct a sequence of positive operators $R_n: Z \to X$ such that $(R_n J)$ converges strongly to $\id_X$ and $(J R_n)$ converges strongly to $\id_Z$. 

    To this end, let the sequence $(S_n)$  in $\calL(L^q(\Omega))$ and the compact sets $K_n$ be as in Lemma~\ref{lem:a-bit-of-space}. (with $p$ in the lemma replaced with $q$). 
    Define $\delta_n := \dist(K_n, \partial \Omega)/3 > 0$ for each index $n$.
    Let $\rho \in C_c^\infty(\bbR^d)$ be a positive test function with integral $1$ and define the mollifiers $\rho_n := \frac{1}{\delta_n^d} \rho(\frac{1}{\delta_n}\argument)$ for each $n$. 
    For every $n$ we set $R_nf := \rho_n \star (S_nf)$ for all $f \in L^q(\Omega)$. 
    Then every operator $R_n$ maps $Z = L^q(\Omega)$ to $C^\infty_c(\Omega) \subseteq X$ and the sequences $(R_n J)$ and $(J R_n)$ converge strongly to $\id_X$ and $\id_Z$ in $\calL(X)$ and $\calL(Z)$, respectively. 
\end{proof}

We end this section with a discussion of the assumption in Theorem~\ref{thm:negative-sobolev-spaces-domain} that the cone in $W^{k,q}_0(\Omega)$ be generating. 
In the proof, this property is needed since the space $X$ in Theorem~\ref{thm:span-lattice-dual} is assumed to have a generating cone. 
In the following two cases, it is not difficult to check that the cone in $W^{k,q}_0(\Omega)$ is indeed generating. 

The first case is when $k=1$ and when the boundary of $\Omega$ is sufficiently smooth to ensure that the trace operator is well-defined on $W^{1,q}(\Omega)$ and that $W^{1,q}_0(\Omega)$ is the kernel of the trace operator. 
Indeed, it is well-known that $W^{1,q}(\Omega)$ is a vector lattice and one can then check that $W^{1,q}_0(\Omega)$ is a sublattice thereof; hence, it has a generating cone. 
However, in this case, one does not need our results to deduce that the span of the cone in the dual space $(W^{1,q}_0(\Omega))' = W^{-1,p}(\Omega)$  is a lattice. Indeed this also follows from classical results in lattice theory, for instance, \cite[Corollary~2.50(2)]{AliprantisTourky2007}.

The second case is when the dimension $d$ is equal to $1$. 
In this case, one has the following result: 

\begin{proposition}
    \label{prop:generating-cone-1D} 
    Let $I \subseteq \bbR$ be a non-empty bounded open interval, let $p \in [1,\infty)$ and let $k \ge 1$ be an integer. 
    Then the cone in $W^{k,p}_0(I)$ is generating.
\end{proposition}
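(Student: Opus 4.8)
The plan is to reduce the statement to the construction of a single positive majorant. The cone of $W^{k,p}_0(I)$ is generating as soon as every element is dominated by a positive one: if, for a given $f \in W^{k,p}_0(I)$, I can find some $g \ge 0$ in $W^{k,p}_0(I)$ with $g \ge f$ pointwise, then $h := g - f \ge 0$ also lies in $W^{k,p}_0(I)$ (being a difference of two elements of this vector space) and $f = g - h$ is the desired decomposition. So the whole task is to produce such a nonnegative majorant $g \in W^{k,p}_0(I)$ with $g \ge f$.

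For $k = 0$ there is nothing to prove, since $W^{0,p}_0(I)$ equals $L^p(I)$ (or $C_0(I)$ for $p = \infty$), which is a sublattice and hence has generating cone via $f = f^+ - f^-$. So I assume $k \ge 1$ and write $I = (a,b)$. I would first record that in one dimension $W^{k,p}(I) \hookrightarrow C^{k-1}(\overline I)$, so that $f \in C^{k-1}(\overline I)$ and the boundary conditions $f^{(j)}(a) = f^{(j)}(b) = 0$ for $0 \le j \le k-1$ are meaningful and hold because $f$ lies in the closure of the test functions. The core idea is then to build two \emph{one-sided} majorants from the Taylor remainder and glue them. Setting $\psi := \modulus{f^{(k)}} \in L^p(I)$, define
\[
    p_L(x) := \frac{1}{(k-1)!}\int_a^x (x-r)^{k-1}\psi(r)\dx r, \qquad
    p_R(x) := \frac{1}{(k-1)!}\int_x^b (r-x)^{k-1}\psi(r)\dx r.
\]
Both are nonnegative and lie in $W^{k,p}(I)$ (their $k$-th derivative is $\psi$, and the lower-order derivatives are continuous, hence in $L^p$ on the bounded interval). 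The Taylor formula with integral remainder based at $a$ gives $f(x) = \frac{1}{(k-1)!}\int_a^x (x-r)^{k-1}f^{(k)}(r)\dx r$, all polynomial boundary terms vanishing by the zero boundary conditions at $a$, so $p_L \ge \modulus{f} \ge f$; expanding instead from $b$ yields $p_R \ge \modulus{f} \ge f$. Moreover $p_L$ is a $k$-fold antiderivative of $\psi$ based at $a$, so it vanishes to order $k-1$ at $a$, and symmetrically $p_R$ vanishes to order $k-1$ at $b$.

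Finally I would glue the two majorants with a smooth cutoff. Choose $\eta \in C^\infty(\overline I)$ with $0 \le \eta \le 1$, $\eta \equiv 1$ near $a$ and $\eta \equiv 0$ near $b$, and set $g := \eta\, p_L + (1-\eta)\, p_R$. Since $\eta$ and $1-\eta$ are nonnegative weights, $g \ge 0$ and, crucially, $g \ge \eta f + (1-\eta) f = f$; and $g \in W^{k,p}(I)$ as a smooth combination of $W^{k,p}$-functions. Near $a$ one has $g = p_L$ and near $b$ one has $g = p_R$, so $g$ vanishes to order $k-1$ at both endpoints; by the standard characterization of $W^{k,p}_0$ on a bounded interval this places $g$ in $W^{k,p}_0(I)$. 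Thus $g$ is the sought nonnegative majorant and $f = g - (g-f)$ completes the argument.

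The main obstacle — and the reason the naive choices fail for $k \ge 2$ — is that one must respect the vanishing of \emph{all} derivatives up to order $k-1$ at \emph{both} endpoints simultaneously, whereas $f^+$ destroys $W^{k,p}$-regularity and a single one-sided Taylor majorant only matches one endpoint. The decisive point is that a convex combination of the two one-sided majorants preserves both nonnegativity and the domination $g \ge f$, while the cutoff restores the correct boundary behaviour at each end. For $p = \infty$ the only additional remark is that $f \in W^{k,\infty}_0(I)$ forces $f^{(k)} \in \overline{C_c(I)} = C_0(I)$, hence $\psi \in C_0(I)$, so the same $g$ again lies in the closure of the test functions.
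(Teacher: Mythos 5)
Your proof is correct and follows essentially the same route as the paper's: one-sided positive majorants of $f$ built by $k$-fold integration of a positive bound for $f^{(k)}$ from each endpoint (the paper integrates $(f^{(k)})^{+}$ via a positive surjection $L^p(I) \to X_0$, you integrate $\modulus{f^{(k)}}$ via the explicit Cauchy kernel), then glued with smooth cutoffs to restore the boundary conditions at both ends. The remaining differences are cosmetic -- the paper combines the majorants as $u_0 f_0 + u_1 f_1$ with overlapping cutoffs rather than your convex combination $\eta p_L + (1-\eta)p_R$ -- and your separate remark on $p=\infty$ (where vanishing traces alone do not characterize $W^{k,\infty}_0$) is, if anything, more careful than the paper's own treatment.
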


\begin{proof}
    We may, and shall, assume that $I = (0,1)$. 
    Consider the ordered Banach spaces
    \[
        X_b := \{f \in W^{k,p}(I): \, f^{(j)}(b) = 0~\forall~j=0,1,\ldots,k-1\}
        \quad 
        \text{for }b \in \{0,1\}
    \]
    with the order inherited from $W^{k,p}(I)$.  
    The operator $L^p(I) \to X_0$ that integrates each function $k$ times from $0$ to the spatial variable is a positive surjection; hence, the cone in $X_0$ is generating. 
    A similar argument shows that the cone in $X_1$ is generating. 

    Now let $f \in W^{k,p}_0(I) = X_0 \cap X_1$. 
    Then by the preceding paragraph, there exist positive functions $f_0 \in X_0$ and $f_1 \in X_1$ that both dominate $f$. 
    Choose $C^\infty$-functions $u_0,u_1: [0,1] \to [0,\infty)$ such that $u_0$ is constantly $1$ on $[0,\frac{1}{2}]$ and vanishes on $[\frac{3}{4}, 1]$ and such that $u_1$ is constantly $1$ on $[\frac{1}{2},1]$ and vanishes on $[0,\frac{1}{4}]$. 
    Then $u_0 f_0 + u_1 f_1$ is in $W^{k,p}_0(I)$, is positive, and dominates $f$. 
\end{proof}

Theorem~\ref{thm:negative-sobolev-spaces-domain} and Proposition~\ref{prop:generating-cone-1D} together give the following result in one spatial dimension:

\begin{corollary}
    \label{cor:negative-sobolev-spaces-domain-1D}
    Let $p \in (1,\infty)$, let $k \ge 1$ be an integer, and let $I \subseteq \bbR$ be a non-empty bounded open interval.
    
    Then the span of $W^{-k,p}(I)_+$ in $W^{-k,p}(I)$ is a KB-space (in particular, a Banach lattice) with respect to a norm equivalent to $\norm{\argument}_{\linSpan(W^{-k,p}(I)_+)}$. 
    Moreover, $L^p(I)$ is a lattice ideal in $\linSpan(W^{-k,p}(I)_+)$ and the canonical embedding is a lattice homomorphism.
\end{corollary}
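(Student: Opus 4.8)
The plan is to obtain this corollary as a direct specialisation of Theorem~\ref{thm:negative-sobolev-spaces-domain} to the case $d = 1$ and $\Omega = I$, so the whole task reduces to verifying the two hypotheses of that theorem. First I would confirm the geometric assumption that $I$ has continuous boundary. By the characterisation recalled immediately before Theorem~\ref{thm:negative-sobolev-spaces-domain}, a non-empty bounded open subset of $\bbR$ has continuous boundary precisely when it is a disjoint union of finitely many open intervals with pairwise positive distance; a single bounded open interval trivially satisfies this condition. Boundedness and openness of $\Omega$ are built into the hypothesis that $I$ is a bounded open interval, so nothing further is needed here.

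The second hypothesis of Theorem~\ref{thm:negative-sobolev-spaces-domain} is that the cone in $W^{k,q}_0(\Omega)$ be generating. This is exactly the conclusion of Proposition~\ref{prop:generating-cone-1D}, applied with the exponent $q$ in place of $p$ -- which is admissible since $q \in (1,\infty) \subseteq [1,\infty]$ -- and with the given integer $k \ge 1$. Having verified both hypotheses, I would simply invoke Theorem~\ref{thm:negative-sobolev-spaces-domain}, which then delivers all three assertions at once: that $\linSpan(W^{-k,p}(I)_+)$ is a KB-space with respect to a norm equivalent to $\norm{\argument}_{\linSpan(W^{-k,p}(I)_+)}$, that $L^p(I)$ is a lattice ideal in this span, and that the canonical embedding is a lattice homomorphism.

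Since the statement is a pure combination of two results already established in this section, there is no genuine mathematical obstacle to overcome; the proof consists entirely of checking that the data of the corollary fit the input of Theorem~\ref{thm:negative-sobolev-spaces-domain}. The only points deserving an explicit word are the two bookkeeping observations above -- that a single interval falls under the continuous-boundary characterisation, and that Proposition~\ref{prop:generating-cone-1D} does cover the exponent $q$ and the index $k$ appearing here -- both of which are immediate.
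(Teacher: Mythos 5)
Your proposal is correct and matches the paper exactly: the corollary is stated there as an immediate combination of Theorem~\ref{thm:negative-sobolev-spaces-domain} and Proposition~\ref{prop:generating-cone-1D}, which is precisely your argument. Your two bookkeeping checks (the single interval has continuous boundary; the proposition applies with exponent $q$ and index $k$) are the only things to verify, and you verify them correctly.
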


If one wants to apply Theorem~\ref{thm:negative-sobolev-spaces-domain} in dimension $d \ge 2$ and for order $k \ge 2$, this leaves the following question:

\begin{open_problem}
    \label{op:generating}
    Let $\Omega \subseteq \bbR^d$ be non-empty and open, let $p \in [1,\infty]$ and let $k \ge 2$ be an integer. 
    Under which assumptions is the positive cone in $W^{k,p}_0(\Omega)$ generating? 
\end{open_problem}

\section{Extrapolation spaces of positive semigroups}
\label{section:extrapolation}

Let $X$ be a Banach lattice and let $(T(t))_{t \ge 0}$ be a $C_0$-semigroup on $X$ that is \emph{positive}, which means that each operator $T(t)$ is positive. 
Let $A$ denote the generator of $(T(t))_{t \ge 0}$; this is a closed and densely defined operator.
For the general theory of $C_0$-semigroups, we refer to \cite{EngelNagel2000} and for positive $C_0$-semigroups specifically to \cite{Nagel1986, BatkaiKramarRhandi2017}.

Fix a real number $\lambda$ that is larger than the real part of every spectral value of $A$. 
The \emph{extrapolation space} $X_{-1}$ is defined as follows: 
for every $x \in X$ one sets $\norm{x}_{-1} := \norm{(\lambda-A)^{-1}x}_X$ and then defines $X_{-1}$ to be the completion of $X$ with respect to this norm. 
Note that $X_{-1}$ does not depend on the choice of $\lambda$ since all norms that one obtains in this way are equivalent. 
One can extend the semigroup generator $A$ to a unique operator $A_{-1}\in \calL(X, X_{-1})$. 
The resolvent can also be extended to an operator from $X_{-1}$ to $X$ that then intertwines $A$ and $A_{-1}$.

The space $X_{-1}$ plays, for instance, an important role in perturbation and linear systems theory. 
In order to study positive perturbations and positive inputs in systems theory, a natural cone is needed in the extrapolation space $X_{-1}$. 
Following \cite{BatkaiJacobVoigtWintermayr2018, Wintermayr2019} we define the cone $X_{-1,+}$ in $X_{-1}$ as 
\begin{align}
    \label{eq:cone-X_-1}
    X_{-1,+}
    := 
    \overline{X_+}^{X_{-1}}
    ,
\end{align}
i.e., as the closure of $X_+$ within $X_{-1}$. 
This is indeed a cone and it satisfies $X_{-1,+} \cap X = X_+$ \cite[Proposition~2.3]{BatkaiJacobVoigtWintermayr2018}. 
The extended resolvent operators $(\mu - A_{-1})^{-1}$ are positive from $X_{-1}$ to $X$ for each $\mu$ that is larger than the real part of all spectral values of $A$ \cite[Remark~2.2]{BatkaiJacobVoigtWintermayr2018}. 
Various order properties of the cone $X_{-1,+}$, even in the case that $X$ is an ordered Banach space, were recently proved in \cite[Section~2.2]{AroraGlueckPaunonenSchwenninger2024}.

As mentioned in the introduction, $X_{-1,+}$ is usually not generating in $X_{-1}$ and so $X_{-1}$ is not a vector lattice. 
However, because every cone of an ordered Banach space is generating in its span, it is reasonable to ask if the span of $X_{-1,+}$ is a vector lattice. 
This is indeed often the case, as the following consequence of Theorem~\ref{thm:span-lattice-oc} shows.

\begin{theorem}
    Let $(T(t))_{t\ge 0}$ be a positive $C_0$-semigroup on a Banach lattice $X$ with order continuous norm. 
    Then $\linSpan(X_{-1,+})$ is a Banach lattice with order continuous norm with respect to a norm equivalent to $\norm{\argument}_{\linSpan(X_{-1,+})}$. 
    Moreover, the canonical embedding of $X$ into $\linSpan(X_{-1,+})$ is a lattice homomorphism and $X$ is a lattice ideal in $\linSpan(X_{-1,+})$.
    If $X$ is a KB-space, then so is $\linSpan(X_{-1,+})$.
\end{theorem}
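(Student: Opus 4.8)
The plan is to deduce the statement as a direct instance of Theorem~\ref{thm:span-lattice-oc}, applied with the ordered Banach space $Z := X_{-1}$ (equipped with the cone $X_{-1,+}$) and with the Banach lattice in that theorem being $X$ itself. Here $X$ has order continuous norm by hypothesis, and $X_{-1}$ is a genuine ordered Banach space because $X_{-1,+}$ is a closed cone by \cite[Proposition~2.3]{BatkaiJacobVoigtWintermayr2018}. The crucial point is to identify the positive bijection $S \in \calL(Z,X)$ that Theorem~\ref{thm:span-lattice-oc} demands: I take $S := (\lambda - A_{-1})^{-1} \colon X_{-1} \to X$, the \emph{extended resolvent}. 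Indeed, by the very definition of the norm $\norm{\argument}_{-1}$, the map $(\lambda - A)^{-1}$ is an isometry from $(X,\norm{\argument}_{-1})$ onto the dense subspace $\dom{A}$ of $X$, so its continuous extension $S$ is an isometric isomorphism of $X_{-1}$ onto $X$; moreover $S$ is positive. This is exactly the place where it matters that Theorem~\ref{thm:span-lattice-oc} does \emph{not} require $S$ to be bipositive, since the inverse $\lambda - A_{-1}$ is in general not positive.

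For the remaining operators I take $J \colon X \to X_{-1}$ to be the canonical embedding, which is positive, injective, and of dense range, and I fix a sequence $\mu_n \to \infty$ of real numbers exceeding the real part of every spectral value of $A$, setting
\[
    R_n := \mu_n (\mu_n - A_{-1})^{-1} \in \calL(X_{-1}, X).
\]
Each $R_n$ is positive, being a nonnegative multiple of an extended resolvent. Since $A_{-1}$ generates the extrapolated $C_0$-semigroup on $X_{-1}$ \cite{EngelNagel2000}, the standard approximation property gives $J R_n = \mu_n(\mu_n - A_{-1})^{-1} \to \id_{X_{-1}}$ in the strong operator topology, which is precisely the convergence hypothesis of Theorem~\ref{thm:span-lattice-oc}. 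For part~(c) I additionally note that on $X = JX$ the extended resolvent coincides with the ordinary one, so $R_n J = \mu_n(\mu_n - A)^{-1} \to \id_X$ strongly on $X$, hence in particular in the weak operator topology.

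With these choices all hypotheses of Theorem~\ref{thm:span-lattice-oc} are in force. Parts~(a) and~(b) then yield at once that $\linSpan(X_{-1,+}) = \linSpan(Z_+)$ is a Banach lattice with order continuous norm, and a KB-space whenever $X$ is one, under a norm equivalent to $\norm{\argument}_{\linSpan(X_{-1,+})}$. Part~(c) gives that $J$ is a lattice homomorphism from $X$ into $\linSpan(X_{-1,+})$ and that its image $JX$ — which we identify with $X$ and which lies in $\linSpan(X_{-1,+})$ because the cone of the Banach lattice $X$ is generating — is a lattice ideal. I expect the only genuine work to lie in carefully verifying the two structural claims about the auxiliary operators, namely the positivity and bijectivity of the extended resolvent $S$ and the strong convergence $J R_n \to \id_{X_{-1}}$; both belong to the standard theory of extrapolation spaces and the extrapolated semigroup, so once they are recorded the theorem follows by a direct citation of Theorem~\ref{thm:span-lattice-oc}.
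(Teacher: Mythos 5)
Your proposal is correct and matches the paper's own proof: the paper likewise applies Theorem~\ref{thm:span-lattice-oc} with $Z := X_{-1}$, $J$ the canonical embedding, $R_n := n(n-A_{-1})^{-1}$, and $S := R_{n_0}$ for a fixed $n_0$ (your $S = (\lambda - A_{-1})^{-1}$ is the same operator up to a positive scalar). The only difference is that you spell out the verification of the hypotheses — positivity and bijectivity of the extended resolvent, and the strong convergence $JR_n \to \id_{X_{-1}}$ and $R_nJ \to \id_X$ — which the paper leaves implicit.
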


\begin{proof}
    Apply Theorem~\ref{thm:span-lattice-oc} to the space $Z := X_{-1}$ with $J: X \to Z$ being the canonical embedding, $R_n := n (n-A_{-1})^{-1}$ for all sufficiently large integers $n$, and $S := R_{n_0}$ for some fixed $n_0$.
\end{proof}

In some situations one can concretely determine the space $\linSpan(X_{-1,+})$:

\begin{example}
    \label{exa:neumann-laplace}
    Let $\Omega \subseteq \bbR^d$ be a bounded domain with Lipschitz boundary, let $p,q \in (1,\infty)$ be Hölder conjugates, and let $\Delta$ denote the Neumann Laplace operator on $X = L^p(\Omega)$. 
    If $d < 2q$, then $\linSpan(X_{-1,+})$ is precisely the space of finite Borel measures on $\overline{\Omega}$; see \cite[Example~2.13(b)]{AroraGlueckPaunonenSchwenninger2024}. 
    Note that, since positive operators on ordered Banach spaces with generating cone are automatically continuous \cite[Theorem~2.32]{AliprantisTourky2007}, all complete norms that turn $\linSpan(X_{-1,+})$ into an ordered Banach space, are equivalent. 
    Hence, the norm $\norm{\argument}_{\linSpan(X_{-1,+})}$ is equivalent to the total variation norm in this example and consequently, the space $\linSpan(X_{-1,+})$ is not reflexive despite $X$ and $X_{-1}$ both being reflexive.

    Since the assumptions of Corollary~\ref{cor:span-reflexive} are also satisfied in this example (with $Z := X_{-1}$) this also shows that the conclusion of Corollary~\ref{cor:span-reflexive} cannot be improved to give that $\linSpan(Z_+)$ is reflexive.
\end{example}

Let us conclude this section with another example of an extrapolation space, which is simpler but nevertheless illuminating.

\begin{example}
    \label{exa:multiplication-op}
    Let $p \in [1,\infty)$, let $(\Omega,\mu)$ be a measure space, 
    and endow $L^p(\Omega,\mu)$ with the pointwise almost everywhere order. 
    Let $m: \Omega \to [0,\infty)$ be measurable. 
    The operator $A: f\mapsto -mf$ on $L^p(\Omega,\mu)$ with domain 
    \begin{align*}
        \dom{A} := \{f \in L^p(\Omega,\mu): \, mf \in L^p(\Omega,\mu)\}
    \end{align*}
    generates a positive $C_0$-semigroup $(T(t))_{t \ge 0}$ given by 
    \begin{align*}
        T(t)f = e^{-tm}f
    \end{align*}
    for each $f \in L^p(\Omega,\mu)$ and each $t \in [0,\infty)$. 
    It is easy to check that the extrapolation space $\big(L^p(\Omega,\mu)\big)_{-1}$ equals $L^p(\Omega,\nu)$ with $\dxShort \nu = \frac{1}{(m+1)^p}\dxShort \mu$, and that the cone~\eqref{eq:cone-X_-1} in the extrapolation space consists simply of the functions in $L^p(\Omega, \nu)$ that are $\ge 0$ $\nu$-almost everywhere.
\end{example}

In contrast to Example~\ref{exa:neumann-laplace}, the cone $X_{-1,+}$ in the extrapolation space $X_{-1} := \big(L^p(\Omega,\mu)\big)_{-1}$ in Example~\ref{exa:multiplication-op} is generating, so $\linSpan(X_{-1,+}) = X_{-1}$. 

\section{Outlook}

Assume that we are in the situation of Theorem~\ref{thm:span-lattice-oc}, but that the Banach lattice $X$ is not required to have order continuous norm. 
We do not know whether $\linSpan(Z_+)$ still needs to be a Banach lattice in this case (the arguments in the proof of Theorem~\ref{thm:span-lattice-oc} show that it suffices to prove that $\linSpan(Z_+)$ is a vector lattice). 
This is the first of the following two open problems.

\begin{open_problems}
    \leavevmode
        (a)
        Let $X$ be a Banach lattice and let $Z$ be an ordered Banach space.
        Assume that there exists a positive bijection $S \in \calL(Z,X)$, a positive operator $J \in \calL(X,Z)$ and a sequence of positive operators $(R_n) \subseteq \calL(Z,X)$ such that $(JR_n) \subseteq \calL(Z)$ converges to $\id_Z$ with respect to the strong operator topology. 
        (If it is helpful, one might also want to assume that, in addition, $R_n J \to \id_X$ strongly.)

        Does it follow that $\linSpan(Z_+)$ is a vector lattice (and thus a Banach lattice with respect to a norm equivalent to $\norm{\argument}_{\linSpan(Z_+)})$?

        (b)
        If the answer to problem~(a) is negative, does it become positive if the mapping $J$ is compact?
    
\end{open_problems}

As explained before the problem statement, Theorem~\ref{thm:span-lattice-oc} says that the answer to problem~(a) is positive if $X$ has order continuous norm. 
Two concrete examples in which $X$ does not have order continuous norm are given in \cite[Examples~5.1 and~5.3]{BatkaiJacobVoigtWintermayr2018}. 
In both examples, $X$ is a space of continuous functions on an interval (so it does not have order continuous norm) and $Z := X_{-1}$ is the extrapolation space of a positive $C_0$-semigroup on $X$. 
However, in both examples the positive cone is computed explicitly in \cite[formulas~(5.4) and~(5.6)]{BatkaiJacobVoigtWintermayr2018} and those results show that the span of the cone is indeed a lattice. 
In both examples the canonical embedding from $X$ into $X_{-1}$ is compact (since the generator of the semigroup has compact resolvent in both cases). 
This might be an indication that the answer to problem~(b) above is positive.

\section*{Acknowledgements} 
The first author was funded by the Deutsche Forschungsgemeinschaft (DFG, German Research Foundation) -- 523942381.
The second author is indebted to the Department of Applied Mathematics of the University of Twente for a very pleasant stay during which some parts of the work on this article were done.
The authors are grateful to Lassi Paunonen for useful comments on the manuscript. 
The authors are also indebted to Jens Wintermayr for asking whether the reflexivity assumption in Corollary~\ref{cor:span-reflexive} can be weakened, a question that inspired Theorem~\ref{thm:span-lattice-oc}.

\bibliographystyle{plainurl}
\bibliography{literature}

@Book{BatkaiKramarRhandi2017,
 Author = {Andr\'as {B\'atkai} and Marjeta {Kramar Fijav\v{z}} and Abdelaziz {Rhandi}},
 Title = {{Positive operator semigroups: From finite to infinite dimensions}},
 FJournal = {{Operator Theory: Advances and Applications}},
 Journal = {{Oper. Theory: Adv. Appl.}},
 ISSN = {0255-0156},
 Volume = {257},
 ISBN = {978-3-319-42811-6; 978-3-319-42813-0},
 Pages = {xviii + 364},
 Year = {2017},
 publisher = {Basel: Springer (Birkh\"auser)},
 Language = {English},
 DOI = {10.1007/978-3-319-42813-0},
 MSC2010 = {47-02 47D06},
 Zbl = {1420.47001}
}

@Book{Nagel1986,
  title     = {{One-parameter semigroups of positive operators}},
  Publisher = {Cham: Springer},
  year      = {1986},
  editor    = {Rainer {Nagel}},
  volume    = {1184},
  series={Lecture Notes in Mathematics},
  issn      = {0075-8434; 1617-9692/e},
  language  = {English},
  msc2010   = {47D03 47-02 47B60 47-06 00Bxx 46B42 34G10 47C15},
  doi={10.1007/BFb0074922},
  zbl       = {0585.47030},
}

@PhdThesis{Wintermayr2019,
  author = {Jens {Wintermayr}},
  title  = {{Positivity in perturbation theory and infinite-dimensional systems}},
  school = {Bergische Universität Wuppertal},
  year   = {2019},
  doi   = {10.25926/pd7n-9570},
  language  = {English},
}

@book {AliprantisTourky2007,
    AUTHOR = {Aliprantis, Charalambos D. and Tourky, Rabee},
     TITLE = {Cones and duality},
    SERIES = {Graduate Studies in Mathematics},
    VOLUME = {84},
 PUBLISHER = {American Mathematical Society, Providence, RI},
      YEAR = {2007},
     PAGES = {xiv+279},
      ISBN = {978-0-8218-4146-4},
   MRCLASS = {46A40 (46B40 46N10 47B60)},
  MRNUMBER = {2317344},
MRREVIEWER = {Nicolae Popovici},
       DOI = {10.1090/gsm/084},
}

@incollection {Glueck2021,
    AUTHOR = {Gl\"{u}ck, Jochen},
     TITLE = {On disjointness, bands and projections in partially ordered
              vector spaces},
 BOOKTITLE = {Positivity and its applications},
    SERIES = {Trends Math.},
     PAGES = {141--171},
 PUBLISHER = {Birkh\"{a}user/Springer, Cham},
      YEAR = {2021},
   MRCLASS = {46A40 (06F20 46B40)},
  MRNUMBER = {4383623},
       DOI = {https://doi.org/10.1007/978-3-030-70974-7_7},
}

@Book{EngelNagel2000,
  title     = {{One-parameter semigroups for linear evolution equations}},
  publisher = {Berlin: Springer},
  year      = {2000},
  author    = {Klaus-Jochen {Engel} and Rainer {Nagel}},
  volume    = {194},
  isbn      = {0-387-98463-1/hbk},
  fjournal  = {{Graduate Texts in Mathematics}},
  issn      = {0072-5285},
  journal   = {{Grad. Texts Math.}},
  language  = {English},
  msc2010   = {47D06 47-02 34K30 34K35 35K30},
  pages     = {xxi + 586},
  zbl       = {0952.47036},
   DOI = {10.1007/b97696},
}

@article{BarbieriEngel2025,
 author = {Barbieri, Alessio and Engel, Klaus-Jochen},
 title = {Perturbations of positive semigroups factorized via {AM}- and {AL}-spaces},
 fjournal = {Journal of Evolution Equations},
 journal = {J. Evol. Equ.},
 issn = {1424-3199},
 volume = {25},
 number = {1},
 pages = {30},
 note = {Id/No 25},
 year = {2025},
 language = {English},
 doi = {10.1007/s00028-024-01049-3},
 keywords = {47D06,35K20,35K90},
 zbMATH = {7971662}
}

@article{BarbieriEngel2026,
 author = {Barbieri, Alessio and Engel, Klaus-Jochen},
 title = {On structured finite-rank perturbations of positive operator semigroups},
 fjournal = {Evolution Equations and Control Theory},
 journal = {Evol. Equ. Control Theory},
 issn = {2163-2480},
 volume = {16},
 pages = {114--138},
 year = {2026},
 language = {English},
 doi = {10.3934/eect.2025070},
 keywords = {47D06,47A55,34G10,46B42},
 zbMATH = {8137521}
}

@article{Gantouh2022a,
  eprint = {2208.10617v3},
  author = {Gantouh, Yassine El},
  title = {Positivity of infinite-dimensional linear systems},
  year = {2022},
archivePrefix={arXiv},
  note={Preprint}
}

@Article{Gantouh2024,
 author = {Gantouh, Yassine El},
 Title = {Well-posedness and stability of a class of linear systems},
 FJournal = {Positivity},
 Journal = {Positivity},
 ISSN = {1385-1292},
 Volume = {28},
 Number = {2},
 Pages = {20},
 Note = {Id/No 16},
 Year = {2024},
 Language = {English},
 DOI = {10.1007/s11117-024-01035-6},
 Keywords = {47D06,47B65,47A55,35F46},
 zbMATH = {7819541}
}

@book{Wnuk1999,
  title={Banach lattices with order continuous norms},
  author={{Wnuk}, Witold},
  year={1999},
  series={Advanced Topics in Mathematics},
  Publisher = {Warsaw: Polish Scientific Publishers PWN},
 Language = {English},
 MSC2010 = {46B42 46-02},
 Zbl = {0948.46017},
  isbn={83-01-12927-1}
}

@Book{Meyer-Nieberg1991,
  title     = {{Banach lattices}},
   Publisher = {Berlin, Heidelberg: Springer-Verlag},
  year      = {1991},
  author    = {Peter {Meyer-Nieberg}},
  isbn      = {3-540-54201-9/pbk},
  fjournal  = {{Universitext}},
  issn      = {0172-5939},
  journal   = {{Universitext}},
  language  = {English},
  msc2010   = {46B42 47-02 46-02 47B65 47B07 46A40},
  pages     = {xv + 395},
  zbl       = {0743.46015},
  doi={10.1007/978-3-642-76724-1},
}

@Book{Schaefer1974,
  title     = {{Banach lattices and positive operators}},
  Publisher = {Cham: Springer},
  year      = {1974},
  author    = {Helmut H. {Schaefer}},
  volume    = {215},
  fjournal  = {{Grundlehren der Mathematischen Wissenschaften}},
  issn      = {0072-7830; 2196-9701/e},
  journal   = {{Grundlehren Math. Wiss.}},
  language  = {English},
  doi ={10.1007/978-3-642-65970-6},
  msc2010   = {47B60 06F20 47B15 15B48 15B51 46-01 46A35 46A40 46B99 46E05 47-01 47B99 47D03 47L05},
  zbl       = {0296.47023},
}

@Article{BatkaiJacobVoigtWintermayr2018,
 Author = {B{\'a}tkai, Andr{\'a}s and Jacob, Birgit and Voigt, J{\"u}rgen and Wintermayr, Jens},
 Title = {Perturbations of positive semigroups on {AM}-spaces},
 FJournal = {Semigroup Forum},
 Journal = {Semigroup Forum},
 ISSN = {0037-1912},
 Volume = {96},
 Number = {2},
 Pages = {333--347},
 Year = {2018},
 Language = {English},
 DOI = {10.1007/s00233-017-9879-0},
 Keywords = {47D06},
 zbMATH = {6893038}
}

@Article{PonceSpector2020,
 Author = {Ponce, Augusto and Spector, Daniel},
 Title = {A decomposition by non-negative functions in the {Sobolev} space {{\(W^{k,1}\)}}},
 FJournal = {Indiana University Mathematics Journal},
 Journal = {Indiana Univ. Math. J.},
 ISSN = {0022-2518},
 Volume = {69},
 Number = {1},
 Pages = {151--169},
 Year = {2020},
 Language = {English},
 DOI = {10.1512/iumj.2020.69.8237},
 Keywords = {46E35},
 zbMATH = {7197969},
 Zbl = {1453.46033}
}

@Book{GilbargTrudinger2001,
 Author = {Gilbarg, David and Trudinger, Neil S.},
 Title = {Elliptic partial differential equations of second order},
 Edition = {Reprint of the 1998},
 FSeries = {Classics in Mathematics},
 Series = {Class. Math.},
 ISSN = {1431-0821},
 ISBN = {3-540-41160-7},
 Year = {2001},
 Publisher = {Berlin: Springer},
 Language = {English},
 Keywords = {35-02,35J65,35B45,35J25,35B50,35B05,47H10},
 zbMATH = {1554166},
 Zbl = {1042.35002},
doi={10.1007/978-3-642-61798-0}
}

@Article{ArendtNittka2009,
 Author = {Arendt, Wolfgang and Nittka, Robin},
 Title = {Equivalent complete norms and positivity},
 FJournal = {Archiv der Mathematik},
 Journal = {Arch. Math.},
 ISSN = {0003-889X},
 Volume = {92},
 Number = {5},
 Pages = {414--427},
 Year = {2009},
 Language = {English},
 DOI = {10.1007/s00013-009-3190-6},
 Keywords = {46B40,47B65,46B03,03E75,46B26},
 zbMATH = {5565273},
 Zbl = {1182.46006}
}

@Article{AroraGlueckPaunonenSchwenninger2024,
    author = {Sahiba {Arora} and Jochen {Gl\"uck} and Lassi {Paunonen} and Felix L. {Schwenninger}},
 fjournal = {Journal of Differential Equations},
 journal = {J. Differ. Equations},
 issn = {0022-0396},
 volume = {440},
 pages = {35},
 note = {Id/No 113435},
 year = {2025},
 language = {English},
 doi = {10.1016/j.jde.2025.113435},
 keywords = {93C25,93C28,93C05,93B28,47D06,46B42,47B65},
 zbMATH = {8056303}
}

@Book{Leoni2017,
 Author = {Leoni, Giovanni},
 Title = {A first course in {Sobolev} spaces},
 Edition = {2nd},
 FSeries = {Graduate Studies in Mathematics},
 Series = {Grad. Stud. Math.},
 ISSN = {1065-7338},
 Volume = {181},
 ISBN = {978-1-4704-2921-8; 978-1-4704-4226-2},
 Year = {2017},
 Publisher = {Providence, RI: American Mathematical Society (AMS)},
 Language = {English},
 DOI = {10.1090/gsm/181},
 Keywords = {46-01,46E35,26A24,26A27,26A30,26A42,26A45,26B30},
 zbMATH = {6821285},
 Zbl = {1382.46001}
}

@Book{Kesavan1989,
 Author = {Kesavan, Srinivasan},
 Title = {Topics in functional analysis and applications},
 ISBN = {0-470-21050-8; 81-224-0062-0},
 Year = {1989},
 Publisher = {John Wiley \& Sons, Inc.; New Delhi: Wiley Eastern Limited},
 Language = {English},
 Keywords = {46-01,47-01,46E35,47F05,47D03,35-01,46F10,35A15,35K05,35L05,47J05,35J20},
 zbMATH = {42137},
 Zbl = {0666.46001}
}

\end{document}